\renewcommand{\arg}{\operatorname{arg}}
\theoremstyle{plain}
\newtheorem{thm}{Theorem}[section]
\newtheorem{lemma}[thm]{Lemma}
\newtheorem{lem}[thm]{Lemma}
\newtheorem{prop}[thm]{Proposition}
\newtheorem{remark}[thm]{Remark}
\newtheorem{cor}[thm]{Corollary}
\theoremstyle{definition}
\newtheorem{defn}[thm]{Definition}
\newcommand{\angdev}{\Delta_{\operatorname{arg}}}
\newcommand{\comdia}[8]
{
\begin{displaymath}
\begin{CD}
#1 @> #5 >> #2\\
@VV #7 V @VV #8 V\\
#3 @> #6 >> #4
\end{CD}
\end{displaymath}
}
\newcommand{\thmref}[1]{Theorem~\ref{#1}}
\newcommand{\C}{\mathbb{C}}
\newcommand{\D}{\mathbb{D}}
\newcommand{\eps}{\epsilon}
\newcommand{\cN}{{\mathcal N}}
\newcommand{\cC}{{\mathcal C}}
\newcommand{\cH}{{\mathcal H}}
\newcommand{\cR}{{\mathcal R}}
\newcommand{\cB}{{\mathcal B}}
\newcommand{\CC}{{\Bbb C}}
\newcommand{\DD}{{\Bbb D}}
\newcommand{\QQ}{{\Bbb Q}}
\title[Siegel disk boundaries]{The boundaries of golden-mean Siegel disks in the complex quadratic H{\'e}non family are not smooth}
\author{Michael Yampolsky}
\author{Jonguk Yang}
\begin{document}

\begin{abstract}
As was recently shown by the first author and others in \cite{GaRY}, golden-mean Siegel disks of sufficiently dissipative complex quadratic H{\'e}non maps are bounded by topological circles. In this paper we investigate the geometric properties of such curves, and demonstrate that they cannot be $C^1$-smooth.
\end{abstract}

\maketitle

\section{Introduction}
\label{sec:intro}
Up to a biholomorphic conjugacy, a complex quadratic H{\'e}non map can be written as
\[
H_{c,a}(x,y)=(x^{2}+c + ay,ax) \hspace{5mm} \text{for} \hspace{5mm} a\neq 0;
\]
this form is unique modulo the change of coordinates 
 $(x,y)\mapsto (x,-y)$, which conjugates $H_{c,a}$ with $H_{c,-a}$.   
In this paper we will always assume that the H{\'e}non map is dissipative, $|a|<1$. Note that for $a=0$, the map $H_{c,a}$ degenerates to
$$(x,y)\mapsto (f_c(x),0),$$ where $f_c(x)=x^2+c$ is a one-dimensional quadratic polynomial. Thus for a fixed small value of $a_0$, the one parameter family $H_{c,a_0}$ can be seen as a small perturbation of the quadratic family. 

As usual, we let $K^{\pm}$ be the sets of points that do not escape to infinity under forward, respectively backward iterations of the H{\'e}non map. Their topological boundaries are $J^{\pm}=\partial K^{\pm}$. Let $K=K^{+}\cap K^{-}$ and $J=J^{-}\cap J^{+}$. The sets $J^{\pm}$, $K^{\pm}$ are unbounded, connected sets in $\C^{2}$ (see \cite{BS1}). The sets $J$ and $K$ are compact (see \cite{HOV1}).  In analogy to one-dimensional dynamics, the set $J$ is called the Julia set of the H{\'e}non map. 

Note that  a  H{\'e}non map $H_{c,a}$ is determined by the multipliers $\mu$ and $\nu$ at a fixed point uniquely up to changing the sign of $a$. In particular,
$$\mu\nu=-a^2,$$
 the parameter $c$ is a function of $a^2$ and $\mu$:
$$c=(1-a^{2})\left(\frac{\mu}{2}-\frac{a^{2}}{2\mu}\right)-\left(\frac{\mu}{2}-\frac{a^{2}}{2\mu}\right)^{2}.$$
Hence, we sometimes  write $H_{\mu,\nu}$ instead of $H_{c,a}$, when convenient.

When $\nu=0$, the H{\'e}non map degenerates to 
\begin{equation}
\label{Plambda} H_{\mu,0}(x,y) = (P_{\mu}(x),0)\text{, where }P_{\mu}(x)=x^{2}+\mu/2-\mu^{2}/4.
\end{equation}

We say that a dissipative H{\'e}non map $H_{c,a}$ has a {\it semi-Siegel fixed point}  (or simply that $H_{c,a}$ is semi-Siegel) if the  eigenvalues of the linear part of $H_{c,a}$ at that fixed point are $\mu=e^{2\pi i \theta}$, with $\theta\in(0,1)\setminus \QQ$ and $\nu$, with $|\nu|<1$, and $H_{c,a}$ is locally biholomorphically conjugate to the linear map 
$$L(x,y)=(\mu x,\nu y).$$
The classic theorem of Siegel states, in particular, that $H_{\mu,\nu}$ is semi-Siegel whenever $\theta$ is Diophantine, that is $q_{n+1}<cq_n^d$, where $p_n/q_n$ are the continued fraction convergents of $\theta$. 
The existence of a linearization is a local result, however, 
in this case there exists a linearizing biholomorphism $\phi:\D\times \C\rightarrow \C^{2}$ sending $(0,0)$ to the semi-Siegel fixed point,
$$H_{\mu,\nu}\circ \phi=\phi\circ L,$$
such that the image $ \phi(\D\times\C)$ is {\it maximal} (see \cite{MNTU}).
 We call  $ \phi(\D\times\C)$ the {\it Siegel cylinder}; it is a connected component of the interior of $K^+$ and its boundary coincides with $J^+$ (see \cite{BS2}). We let 
$$\Delta=\phi(\D\times\{0\}),$$
and by analogy with the one-dimensional case call it the {\it Siegel disk} of the H{\'e}non map. Clearly, the Siegel cylinder is 
equal to the stable manifold $W^s(\Delta)$, and 
  $\Delta\subset K$ (which is always bounded). 
Moreover,  $\partial \Delta\subset J$, the Julia set of the H{\'e}non map.

\begin{remark}
Let $\textbf{q}$ be the semi-Siegel fixed point of the H{\'e}non map. Then $\Delta\subset W^{c}(\textbf{q})$, the center manifold of $\textbf{q}$ (see e.g. \cite{S} for a definition of $W^c$). The center manifold is not unique in general, but all center manifolds $W^{c}(\textbf{q})$ coincide on the Siegel disk. This phenomenon is nicely
illustrated in \cite{O}, Figure 5.
\end{remark}

In a recent paper \cite{GaRY} it was shown that:

\begin{thm}[\cite{GaRY}]
\label{thm:gry}
 There exists $\delta>0$ such that the following holds.
Let $\theta_*=(\sqrt{5}-1)/2$ be the inverse golden mean, $\mu_*=e^{2\pi i\theta_*}$, and let $|\nu|<\delta$. Then the boundary of the Siegel disk of $H_{\mu_*,\nu}$ is a homeomorphic image of the circle.

Furthermore, the linearizing map
\begin{equation}
\label{eq:lin}
\phi:\D\times \{0\}\rightarrow \Delta
\end{equation}
extends continuously and injectively to the boundary. However, 
the restriction  $$\phi:S^1\times \{0\}\rightarrow \partial \Delta$$ is not $C^1$-smooth.
\end{thm}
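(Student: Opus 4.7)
The plan is to argue by contradiction, using the golden-mean renormalization theory developed in \cite{GaRY} and invoked throughout this paper. Suppose $\phi\colon S^1\times\{0\}\to\partial\Delta$ were $C^1$-smooth at some point $\zeta_0\in S^1$. Differentiating the linearizing conjugacy $\phi\circ L = H_{\mu_*,\nu}\circ\phi$ shows that a tangent vector at $\zeta_0$ is transported by $DH_{\mu_*,\nu}$ to a tangent vector at $\phi(\mu_*\zeta_0)$, so $C^1$-smoothness propagates along the rotation orbit $\{\mu_*^n\zeta_0\}$. Since this orbit is dense in $S^1$, $DH_{\mu_*,\nu}$ is holomorphic, and $\partial\Delta$ is compact, a standard equicontinuity argument promotes this to $C^1$-smoothness of $\phi$ on all of $S^1$, with a continuous nowhere-vanishing tangent line field along $\partial\Delta$ (the case $\phi'=0$ along the orbit being ruled out by injectivity of the boundary extension).

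Next, I would invoke a renormalization operator $\cR$ on strongly dissipative H\'enon-like germs. The key input, established in \cite{GaRY}, is that the iterates $\cR^n H_{\mu_*,\nu}$ converge exponentially fast to a renormalization fixed point $F_\infty$ whose dynamics is essentially one-dimensional: up to contraction in the strong-stable direction, $F_\infty$ coincides with the renormalization fixed point of the one-dimensional polynomial $P_{\mu_*}$. Associated with $\cR^n$ are affine rescaling charts $\Phi_n$ that magnify a nested sequence of neighborhoods of a distinguished ``tip'' point $p_\infty\in\partial\Delta$ to unit size, and the rescalings $\Phi_n^{-1}(\partial\Delta\cap U_n)$ converge in Hausdorff topology to the boundary of the Siegel disk of $F_\infty$.

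The contradiction then comes from a straightening--versus--renormalization dichotomy. On one hand, $\phi$ being $C^1$ at the preimage $\zeta_\infty\in S^1$ of the tip means that the piece of $\partial\Delta$ through $p_\infty$ admits a tangent line at $p_\infty$; applying the affine rescalings $\Phi_n^{-1}$ then forces the rescaled arcs to converge to that tangent line, a straight segment. On the other hand, the Hausdorff limit is the Siegel disk boundary of $F_\infty$, which inherits the non-trivial self-similar structure preserved by $\cR$. Via the identification with the one-dimensional fixed point, this boundary is precisely the golden-mean Siegel disk boundary of $P_{\mu_*}$ at the critical point, which is well-known to carry a non-linear self-similar geometry; in particular it is not a straight segment. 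This contradicts the $C^1$-assumption.

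The principal obstacle is the matching of the one-dimensional $C^1$-geometry of the parametrization $\phi$ with the genuinely two-dimensional H\'enon rescaling. Concretely, one must verify that the tangent direction $\phi'(\zeta_\infty)$ is transverse to the strong-stable direction of $\cR^n$ at the tip, for otherwise the leading term of $\phi$ in the rescaled coordinates would vanish in the limit and the ``straightening'' half of the dichotomy would collapse. The asymptotic splitting of the tangent bundle along $W^s(\Delta)$ into weak (``horizontal'') and strong-stable (``vertical'') components, furnished by the renormalization estimates of \cite{GaRY} together with the $|\nu|<\delta$ hypothesis, is precisely what is needed to enforce this transversality and make the argument rigorous.
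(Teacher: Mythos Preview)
The paper does not prove Theorem~\ref{thm:gry} directly: it is quoted from \cite{GaRY}, and the only argument given (Section~3.1) is the remark that it follows from Theorem~\ref{thm:gry2} together with the identification $\partial\Delta = s_0(\gamma_0)\cup H_{\mu_*,\nu}(s_0(\gamma_0))$. In other words, the paper's route is: the conjugacy between the rigid rotation $(R,L)$ and the action of $\Sigma_0$ on the renormalization arc $\gamma_0$ is shown in \cite{GaRY} not to be $C^1$; since that conjugacy \emph{is} (a reparametrization of) $\phi|_{S^1}$ once the arc is identified with $\partial\Delta$, the conclusion follows.

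Your proposal takes a genuinely different route: you pass from ``$\phi|_{S^1}$ is $C^1$'' to ``$\partial\Delta$ is a $C^1$ curve'' and then attempt a blow-up dichotomy. That second step is not Theorem~\ref{thm:gry} at all but the Main Theorem of the present paper, and your sketch of it diverges from the paper's argument in an important way. You assert that the renormalization charts $\Phi_n$ are \emph{affine}, so that a $C^1$ arc at the tip blows up to a straight segment. In the framework used here they are not: each $\phi_n$ in \eqref{eq:n change of coordinates} involves the genuinely nonlinear coordinate changes $H_n$ and $V_n$ of \eqref{eq:nonlinear change of coord}. What the paper actually establishes is the normality of $\{\sigma^k_0\Phi^k_0\}$ (Theorem~\ref{normality}); this gives bounded conformal distortion, not straightening, so your dichotomy as stated collapses.

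The paper's proof of the Main Theorem does not run a ``Hausdorff limit is a fractal'' argument either. It isolates a local mechanism at the critical point: Lemma~\ref{corner lem} and Corollary~\ref{deformed corner lem} show that a smooth arc passing close to a simple critical point must have angular deviation at least $1/6-\delta$ either before or after the (near-)quadratic map. Since $\gamma_k$ passes near the critical point of $\eta_k$ and $A_k=\iota(\eta_k)+O(\epsilon^{2^{k-1}})$, this yields a uniform lower bound on $\angdev(\gamma_0,\cdot)$ over the shrinking sets $\Phi^k_0(X_k\times X_k)$ or $\Phi^k_0(Y_k\times Y_k)$, contradicting $C^1$-smoothness of $\gamma_0$. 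Your approach could perhaps be repaired by replacing ``affine'' with ``bounded distortion via Theorem~\ref{normality}'' and then arguing that $\gamma_*$ is not a smooth analytic arc; but you would still need a quantitative statement linking the $C^1$ hypothesis on $\gamma_0$ to tangent-line control on $\gamma_k$, and that is precisely what the corner lemma delivers in the paper's argument.
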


\noindent
This is the first  result of its kind on the structure of the boundaries of Siegel disks of complex H{\'e}non maps. It is based on  a renormalization theory for two-dimensional dissipative H{\'e}non-like maps, developed  in \cite{GaYa2}. 
%A H\'enon-like map (see \cite{dCLM}) has the form $H(x,y) = (f(x)+\epsilon(x,y), ax)$, for some small $\epsilon$. In this %normalization, it has Jacobian $-a \partial{\epsilon}/\partial{y}$ and it reduces to the standard H\'enon map when $f(x)=x^2+c$ and $\epsilon(x,y) = ay$. 
Below, we will briefly review the relevant renormalization results.

\thmref{thm:gry} raises a natural question whether the boundary $\partial \Delta$ can ever lie on a smooth curve.
Calssical results (see \cite{War}) imply that the smoothness of $\partial \Delta$ must be less than $C^{1+\eps}$ -- otherwise, $\phi$ would have a $C^{1+\eps}$ extension to the boundary, contradicting Theorem~\ref{thm:gry}. However, we can ask, whether $\partial\Delta$ can be a $C^1$-smooth curve. In the present note we answer this in the negative:

\medskip
\noindent
{\bf Main Theorem.} {\it Let $\delta>0$ be as in \thmref{thm:gry} and $|\nu|<\delta$. Then the boundary of the Siegel disk of $H_{\mu_*,\nu}$ is not $C^1$-smooth.  }

\medskip
\noindent
 \section{Review of renormalization theory for Siegel disks}
In this section we give a brief summary of the relevant statements on renormalization of Siegel disks; we refer the reader to \cite{GaYa2} for the details. 
\subsection{One-dimensional renormalization: almost-commuting pairs}
For a domain $Z \subset \mathbb{C}$, we denote $\mathcal{H}(Z)$ the Banach space of bounded analytic functions $f : Z \to \mathbb{C}$ equipped with the norm
\begin{equation}\label{eq:snorm1}
\| f\|= \sup_{x \in Z}|f(x)|.
\end{equation}

Denote $\mathcal{H}(Z, W)$ the Banach space of bounded pairs of analytic functions $\zeta=(f, g)$  from domains $Z \subset \mathbb{C}$ and $W \subset \mathbb{C}$ respectively to $\CC$ equipped with the norm
\begin{equation}
\label{eq:unorm1}\| \zeta\|= \frac{1}{2} \left(   \|f\|+  \|g\| \right).
\end{equation}
Henceforth, we assume that the domains $Z$ and $W$ contain $0$.

For a pair $\zeta=(f,g)$, define the \emph{rescaling map} as
\begin{equation}\label{eq:rescaling1}
\Lambda(\zeta) := (s_\zeta^{-1} \circ f \circ s_\zeta, s_\zeta^{-1} \circ g \circ s_\zeta),
\end{equation}
where
\begin{displaymath}
s_\zeta(x) := \lambda_\zeta x
\hspace{5mm} \text{and} \hspace{5mm}
\lambda_\zeta := g(0).
\end{displaymath}

\begin{defn}
We say that $\zeta= (\eta, \xi) \in \mathcal{H}(Z,W)$ is a {\it critical pair} if $\eta$ and $\xi$ have a simple unique critical point at $0$. The space of critical pairs is denoted by $\cC(Z,W)$.
\end{defn}

\begin{defn}
We say that $\zeta= (\eta, \xi) \in \mathcal{C}(Z,W)$ is a {\it commuting pair} if
\begin{displaymath}
\eta \circ \xi = \xi \circ \eta.
\end{displaymath}
\end{defn}

\begin{defn}
We say that $\zeta= (\eta, \xi) \in \mathcal{C}(Z,W)$ is an {\it almost commuting pair} (cf. \cite{Bur,Stir}) if
\begin{displaymath}
\frac{d^i (\eta \circ \xi - \xi \circ \eta)}{dx^i}(0) = 0
\hspace{5mm} \text{for} \hspace{5mm}
i = 0, 2,
\end{displaymath}
and
\begin{displaymath}
\xi(0)=1.
\end{displaymath}
The space of almost commuting pairs is denoted by $\cB(Z, W)$.
\end{defn}

\begin{prop}[cf. \cite{GaYa2}]
The spaces $\cC(Z,W)$ and $\cB(Z,W)$ have the structure of an immersed Banach submanifold of $\cH(Z,W)$ of codimension $2$ and $5$ respectively.
\end{prop}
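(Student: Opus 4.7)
The approach is to realize each of $\cC(Z,W)$ and $\cB(Z,W)$ (locally) as the zero set of an explicit holomorphic map from $\cH(Z,W)$ to a finite-dimensional complex target, intersected with open conditions, and then invoke the implicit function theorem in Banach spaces after checking that the defining maps are submersions. For $\cC(Z,W)$, the defining map is the bounded linear functional $\Psi_\cC:\cH(Z,W)\to\C^2$, $\Psi_\cC(\eta,\xi)=(\eta'(0),\xi'(0))$, whose kernel is a closed affine subspace of codimension $2$. Intersecting with the open conditions $\eta''(0)\neq 0$, $\xi''(0)\neq 0$, and that $0$ is the unique critical point of $\eta$ in $Z$ (respectively of $\xi$ in $W$)---the last being open by the argument principle---gives $\cC(Z,W)$ the structure of a codimension-$2$ Banach submanifold.

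For $\cB(Z,W)\subset\cC(Z,W)$, adjoin three further constraints, packaged in $\Psi_\cB(\eta,\xi)=\bigl(\xi(0)-1,\ F(0),\ F''(0)\bigr)$ with $F:=\eta\circ\xi-\xi\circ\eta$, defined on the open subset $U\subset\cH(Z,W)$ where the compositions make sense. Note that $F'(0)$ vanishes automatically on $\cC(Z,W)$ by the chain rule (both $\eta'(0)$ and $\xi'(0)$ are zero), which is why only the orders $i=0,2$ enter the definition. Since composition is holomorphic on $U$ and pointwise derivative evaluation at $0$ yields bounded linear functionals on $\cH$ by Cauchy's estimates, $\Psi_\cB$ is a holomorphic map of Banach manifolds. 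To realize $\cB$ as a codimension-$5$ submanifold it suffices to show that $d(\Psi_\cC,\Psi_\cB)$ is surjective onto $\C^5$ at every $(\eta_0,\xi_0)\in\cB$. A direct chain-rule computation gives, along a tangent $(h_1,h_2)$, the five coordinates
\[
\bigl(h_1'(0),\ h_2'(0),\ h_2(0),\ \delta F(0),\ \delta F''(0)\bigr),
\]
where $\delta F(0)$ and $\delta F''(0)$ are explicit linear combinations of values and first/second derivatives of $h_1,h_2$ at the three points $0$, $\xi_0(0)=1$, and $\eta_0(0)$. Choosing $h_1,h_2$ to be polynomials whose $2$-jets at these distinct points can be prescribed independently yields five tangent directions whose images form a basis of $\C^5$, and the Banach implicit function theorem concludes.

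The main technical obstacle is handling the composition operator: $(\eta,\xi)\mapsto\eta\circ\xi$ is only partially defined on $\cH(Z,W)$, being continuous and holomorphic only on the open set $U$ where $\xi(W)\subset Z$ and $\eta(Z)\subset W$. One must verify that the functionals $F^{(i)}(0)$ are bounded on $\cH(Z,W)$ and depend holomorphically on the pair---both follow from Cauchy estimates, but require care. A secondary point is the linear independence needed in the surjectivity step: it relies on the three evaluation points $0$, $\xi_0(0)=1$, and $\eta_0(0)$ being mutually distinct, an open and dynamically natural condition in the renormalization setup, together with density of polynomials in $\cH(Z)$ and $\cH(W)$. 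The word \emph{immersed} in the statement accommodates possible global failures of embeddedness, but the local submanifold structure---which is what the proposition asserts---follows from the above.
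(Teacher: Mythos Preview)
The paper does not supply its own proof of this proposition; it is imported from \cite{GaYa2} via the ``cf.'' attribution. Your approach---realize the spaces as level sets of finite-rank holomorphic maps and check the submersion condition---is the standard one and is essentially what one finds in the cited reference, so in that sense you are on target.

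A few small points are worth tightening. First, you do not need density of polynomials in $\cH(Z)$ or $\cH(W)$ (which, incidentally, can fail for the sup norm on an open domain); all you need is that for any finite set of points and any prescription of $2$-jets there, some element of $\cH(Z)$ realizes it, and Hermite interpolation by polynomials already does this once $Z$, $W$ are bounded. Second, your surjectivity check can be made explicit: from
\[
\delta F(0)=h_1(1)+\eta_0'(1)\,h_2(0)-h_2(\eta_0(0))-\xi_0'(\eta_0(0))\,h_1(0)
\]
and (using $\eta_0'(0)=\xi_0'(0)=0$)
\[
\delta F''(0)=h_1'(1)\,\xi_0''(0)+\eta_0'(1)\,h_2''(0)-h_2'(\eta_0(0))\,\eta_0''(0)-\xi_0'(\eta_0(0))\,h_1''(0),
\]
one sees directly that $h_1(1)$ and $h_1'(1)$ (with $\xi_0''(0)\neq 0$) are free parameters not coupled to the first three coordinates, so no appeal to generic distinctness of $0$, $1$, $\eta_0(0)$ is needed beyond $1\neq 0$. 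Third, the ``uniqueness of the critical point in all of $Z$'' is not an open condition in the sup norm without further hypotheses on $Z$; this is indeed one of the reasons for the qualifier \emph{immersed}, and your closing remark handles it adequately.
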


Denote
\begin{displaymath}
c(x) := \bar{x}.
\end{displaymath}

\begin{defn} \label{def:1d pre-renormalization}
Let $\zeta = (\eta, \xi) \in \cB(Z,W)$. The \emph{pre-renormalization} of $\zeta$ is defined as:
\begin{displaymath}
p\mathcal{R}((\eta, \xi)) := (\eta \circ \xi, \eta).
\end{displaymath}
The \emph{renormalization} of $\zeta$ is defined as:
\begin{displaymath}
\mathcal{R}((\eta, \xi)):= \Lambda(c \circ \eta \circ \xi\circ c, c \circ \eta \circ c).
\end{displaymath}
\end{defn}

It is easy to see that
\begin{prop}
The renormalization of an (almost) commuting pair is an (almost) commuting pair (on different domains).
\end{prop}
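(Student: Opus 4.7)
The plan is to verify each defining axiom of an (almost) commuting pair for $\Ren(\zeta)$ by reducing to corresponding properties of $\zeta$, along the decomposition $\Ren=\Lambda\circ(c\circ\,\cdot\,\circ c)\circ\pRen$. The bulk of the work lies in the pre-renormalization step $(\eta,\xi)\mapsto(\eta\circ\xi,\eta)$; the subsequent anti-holomorphic conjugation and linear rescaling then require only bookkeeping.

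For the commuting case I would just compose $\eta\circ\xi=\xi\circ\eta$ with $\eta$ on the left to obtain $(\eta\circ\xi)\circ\eta=\eta\circ(\eta\circ\xi)$, which is commutation of the new pair. For the almost-commuting case I would set $F:=\xi\circ\eta$, $G:=\eta\circ\xi$, and analyze the vanishing order at $0$ of the new commutator $h:=\eta\circ F-\eta\circ G$. The hypothesis $(\eta\circ\xi-\xi\circ\eta)(0)=0$ gives $F(0)=G(0)=:y_0$; the critical-point condition $\eta'(0)=\xi'(0)=0$ then forces $F'(0)=G'(0)=0$, so $F(y)-y_0$ and $G(y)-y_0$ are $O(y^2)$; and the hypothesis on second derivatives, combined with the same critical-point information, yields $F''(0)=G''(0)$, hence $F-G=O(y^3)$. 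Taylor-expanding $\eta$ about $y_0$,
\[
\eta(F(y))-\eta(G(y))=\eta'(y_0)\bigl(F(y)-G(y)\bigr)+\tfrac{\eta''(y_0)}{2}\bigl[(F(y)-y_0)^2-(G(y)-y_0)^2\bigr]+\cdots,
\]
the leading term is $O(y^3)$ and the remaining summands are $O(y^5)$, so $h(y)=O(y^3)$. Reading off vanishing at $i=0$ and $i=2$ gives the almost-commutation condition for $(\eta\circ\xi,\eta)$.

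Next I would promote this to $\Ren(\zeta)$. Conjugation by $c$ sends each Taylor coefficient at $0$ to its complex conjugate and satisfies $c\circ c=\operatorname{id}$, so it preserves composition identities and orders of vanishing; rescaling by $\Lambda$ is conjugation by the linear map $s_\zeta(x)=\lambda_\zeta x$ with $\lambda_\zeta=\overline{\eta(0)}$, which likewise preserves commutators and vanishing orders. The normalization $\tilde\xi(0)=1$ is built into $\Lambda$, since by construction $s_\zeta^{-1}(\lambda_\zeta)=1$. The new second component is a conjugate-rescaling of $\eta$, so it retains the simple critical point at $0$. For the new first component, the chain rule gives $(\eta\circ\xi)'(0)=0$ and $(\eta\circ\xi)''(0)=\eta'(\xi(0))\,\xi''(0)=\eta'(1)\,\xi''(0)\neq 0$, using $\xi(0)=1$ (the almost-commuting normalization), the simplicity of $\xi$'s critical point, and the fact that $\eta$'s unique critical point is at $0\neq 1$. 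Uniqueness of $0$ as the critical point of the new first component on the renormalized domain is the content of the parenthetical phrase ``on different domains.''

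The main, though modest, obstacle is recognizing that the almost-commuting hypothesis actually forces $\eta\circ\xi-\xi\circ\eta$ to vanish to order $3$ at $0$: the listed conditions $i=0,2$ combine with the automatic vanishing at $i=1$ coming from the critical-point axiom to give this third-order vanishing, which is the key input that makes $h=O(y^3)$ after the Taylor-expansion step above. Without this extra order the remainder in the expansion of $\eta\circ F-\eta\circ G$ would only be $O(y^2)$ and the argument would collapse.
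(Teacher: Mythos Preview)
Your argument is correct. The paper itself gives no proof of this proposition: it is introduced with the phrase ``It is easy to see that'' and left at that, so you have simply supplied the details the authors considered routine. The decomposition $\Ren=\Lambda\circ(c\circ\,\cdot\,\circ c)\circ\pRen$ is the natural one, and your Taylor-expansion argument showing the new commutator is $O(y^3)$---by first observing that the critical-point axiom forces the $i=1$ condition automatically, so that $F-G=O(y^3)$---is exactly the computation one expects.

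One minor remark: your check that $\eta\circ\xi$ has a \emph{simple} critical point at $0$ invokes $\xi(0)=1$, which is part of the almost-commuting normalization but not of the bare commuting definition. For a merely commuting critical pair the same conclusion needs the non-degeneracy $\xi(0)\neq 0$ (so that $\eta'(\xi(0))\neq 0$); this, together with uniqueness of the new critical point, is implicitly absorbed into the parenthetical ``on different domains,'' as you correctly note.
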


It is convenient to introduce the following multi-index notation. Let $\mathcal{I}$ be the space of all finite multi-indexes
\begin{displaymath}
\overline{\omega} = (a_0, \ldots{}, a_n) \in (\{0\} \cup \mathbb{N})^n
\hspace{5mm} \text{for some } n \in \mathbb{N},
\end{displaymath}
with the partial ordering relation defined as follows:
\begin{displaymath}
(a_0, a_1, \ldots{}, a_k, b) \prec (a_0, a_1, \ldots{}, a_n, a_{n+1})
\end{displaymath}
if either $k < n$ and $b \leq a_{k+1}$, or $k = n$ and $b < a_{n+1}$. For a pair $\zeta = (\eta, \xi)$ and a multi-index $\overline{\omega} = (a_0, \ldots{}, a_n) \in \mathcal{I}$, denote
\begin{displaymath}
\zeta^{\overline{\omega}} = \phi^{a_n} \circ \ldots{} \circ \xi^{a_1} \circ \eta^{a_0}
\end{displaymath}
where $\phi$ is either $\eta$ or $\xi$, depending on whether $n$ is even or odd. Define a sequence $\{\overline{\alpha}_0, \overline{\alpha}_1, \ldots\} \subset \mathcal{I}$ such that
\begin{equation} \label{renormalization iterate}
p\mathcal{R}^n(\zeta) = (\zeta^{\overline{\alpha}_n}, \zeta^{\overline{\alpha}_{n-1}}).
\end{equation}

The following is shown in \cite{GaYa2}:
\begin{thm} \label{1d renormalization}
There exist topological disks $\hat Z \Supset Z$ and $\hat W \Supset W$, and an almost commuting pair $\zeta_* = (\eta_*, \xi_*) \in \cB(Z,W)$ such that the following holds:
\begin{enumerate}
\item There exists a neighbourhood $\cN(\zeta_*)$ of $\zeta_*$ in the submanifold $\cB(Z,W)$ such that
\begin{displaymath}
\cR : \cN(\zeta_*) \to \cB(\hat Z,\hat W)
\end{displaymath}
is an anti-analytic operator.
\item The pair $\zeta_*$ is the unique fixed point of $\cR$ in $\cN(\zeta_*)$.
\item The differential $D\cR^2|_{\zeta_*}$ is a compact linear operator. It has a single, simple eigenvalue with modulus greater than $1$. The rest of its spectrum lies inside the open unit disk $\DD$  (and hence is compactly contained in $\DD$ by the spectral theory of compact operators).
\end{enumerate}
\end{thm}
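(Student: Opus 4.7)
The plan is to establish (1)--(3) in sequence: first construct $\zeta_*$ as a limit of iterated renormalizations of a one-dimensional Siegel pair, then verify that $\cR$ is anti-analytic on a Banach neighbourhood, and finally extract hyperbolicity of $D\cR^2|_{\zeta_*}$ from compactness together with identification of the unique expanding mode.

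For the construction, I would exploit the self-similarity $1/\theta_*=1+\theta_*$: the golden mean is the unique fixed point of the Gauss map in $(0,1)$, so $\cR$, which advances the continued-fraction expansion by one step, is combinatorially compatible with a fixed pair of rotation number $\theta_*$. Start from the one-dimensional Siegel polynomial $P_{\mu_*}$ of \eqref{Plambda}, extract from its near-critical first-return dynamics an almost commuting pair $\zeta_0\in\cB(Z_0,W_0)$, and iterate. Complex a priori bounds of McMullen/Inou--Shishikura type control the geometry of $\cR^n(\zeta_0)$, and the compact embedding $\cH(\hat Z,\hat W)\hookrightarrow\cH(Z,W)$ for $\hat Z\Supset Z$, $\hat W\Supset W$ supplies a Montel-type compactness which, combined with a diagonal argument, yields a subsequential limit $\zeta_*$. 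Continuity of $\cR$ on a neighbourhood of $\zeta_*$ then forces $\cR(\zeta_*)=\zeta_*$. The disks $Z,\hat Z,W,\hat W$ are chosen from the a priori bounds so that all the relevant compositions stay inside the larger disks.

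For anti-analyticity, the only non-holomorphic ingredient of
\[
\cR(\eta,\xi)=\Lambda\bigl(c\circ\eta\circ\xi\circ c,\; c\circ\eta\circ c\bigr)
\]
is the outer conjugation by $c(x)=\bar x$; for $\eta(x)=\sum a_n x^n$ one has $(c\circ\eta\circ c)(x)=\sum\bar a_n x^n$, which is analytic in $x$ but anti-analytic in the coefficients of $\eta$. On a small Banach neighbourhood $\cN(\zeta_*)$ the rescaling factor $\lambda_\zeta$ stays bounded away from $0$, so $\Lambda$ is holomorphic in $\zeta$, and composition of this holomorphic dependence with the conjugation yields anti-analyticity of $\cR:\cN(\zeta_*)\to\cB(\hat Z,\hat W)$. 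Uniqueness of the fixed point in a smaller neighbourhood then follows from an inverse-function-theorem argument once hyperbolicity is in place. Compactness of $D\cR^2|_{\zeta_*}$ comes directly from the compact embedding $\cH(\hat Z,\hat W)\hookrightarrow\cH(Z,W)$. To locate the spectrum, I would exhibit an expanding eigenvector along the one-parameter family of Siegel pairs obtained by infinitesimally varying the rotation number near $\theta_*$; under $\cR^2$ this direction is preserved and multiplied by the square of the Gauss-map derivative, $\theta_*^{-2}>1$. The codimension-$5$ almost-commuting constraint rules out most transverse freedom, and the remaining directions are shown to contract using Siegel/Herman-type estimates.

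The main obstacle is verifying that no eigenvalue of $D\cR^2|_{\zeta_*}$ other than the rotation-number mode has modulus $\geq 1$. Unlike in the period-doubling or unimodal settings, no purely conceptual argument is available for Siegel renormalization: historically this step is handled either by a computer-assisted proof with interval arithmetic (Stirnemann; Gaidashev--Yampolsky) or by cylinder renormalization producing a Schwarz-lemma contraction on an invariant complex submanifold. I would follow the computer-assisted route: reduce hyperbolicity to a rigorous numerical bound on a finite truncation of $D\cR^2|_{\zeta_*}$, control the infinite tail via Cauchy estimates on $\hat Z$ and $\hat W$, and combine this with anti-analyticity and compactness to obtain the full spectral conclusion, from which uniqueness of the fixed point in $\cN(\zeta_*)$ also follows.
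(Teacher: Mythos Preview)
The paper gives no proof of this theorem: it is introduced with ``The following is shown in \cite{GaYa2}'' and simply quoted. There is therefore no argument in the present paper to compare your proposal against; the result is imported wholesale from the cited reference.

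Your sketch is a reasonable outline of the kind of argument carried out in the cited literature (\cite{GaYa2}, building on \cite{Stir} and \cite{Bur}), and you correctly identify the computer-assisted spectral verification as the essential difficulty. Two points in your outline would need tightening in an actual proof. First, producing $\zeta_*$ as a \emph{subsequential} limit of $\cR^n(\zeta_0)$ does not by itself yield a fixed point: continuity of $\cR$ applied only along a subsequence gives nothing, and one needs either a rigidity statement forcing the full sequence to converge, or---as in the cited works---a direct contraction-mapping/Newton-type argument in the Banach space that constructs the fixed point and its local uniqueness simultaneously. Second, identifying the unstable eigenvalue of $D\cR^2|_{\zeta_*}$ with the Gauss-map quantity $\theta_*^{-2}$ is only a heuristic (and the arithmetic is off: the square of $|G'(\theta_*)|=\theta_*^{-2}$ is $\theta_*^{-4}$). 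The expanding direction does correspond to varying the rotation number, but the eigenvalue itself is an analytic invariant of $\zeta_*$ determined in the rigorous numerics, not a combinatorial number read off from the continued fraction. Neither point is fatal to your plan, since you already propose to defer the hard step to the computer-assisted route, which is exactly what \cite{GaYa2} does.
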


\subsection{Renormalization of two-dimensional maps}
For a domain $\Omega \subset \mathbb{C}^2$, we denote $O(\Omega)$ the Banach space of bounded analytic functions $F : \Omega \to \mathbb{C}^2$ equipped with the norm
\begin{equation}\label{eq:snorm2}
\| F\|= \sup_{(x,y) \in \Omega}|F(x,y)|.
\end{equation}
Define
\begin{equation}\label{eq:ynorm1}
\| F\|_y := \sup_{(x,y) \in \Omega}|\partial_y F(x,y)|.
\end{equation}
Moreover, for
\begin{displaymath}
F =
\begin{bmatrix}
f_1 \\
f_2
\end{bmatrix},
\end{displaymath}
define
\begin{equation}\label{eq:diagnorm1}
\| F\|_{\text{diag}} := \sup_{(x,y) \in \Omega}|f_1(x,y)-f_2(x,y)|.
\end{equation}

Denote $O(\Omega, \Gamma)$ the Banach space of bounded pairs of analytic functions $\Sigma=(F,G)$  from domains $\Omega \subset \mathbb{C}^2$ and $\Gamma \subset \mathbb{C}^2$ respectively to $\CC^2$ equipped with the norm
\begin{equation}\label{eq:unorm2}
\| \Sigma\|= \frac{1}{2} \left(\|F\|+  \|G\|\right).
\end{equation}
Define
\begin{equation}\label{eq:ynorm2}
\| \Sigma\|_y :=  \frac{1}{2} \left(\|F\|_y +  \|G\|_y \right).
\end{equation}
Moreover,
\begin{equation}\label{eq:diagnorm}
\| \Sigma\|_{\text{diag}} := \frac{1}{2} \left(\|F\|_{\text{diag}}+  \|G\|_{\text{diag}} \right).
\end{equation}

Henceforth, we assume that
\begin{displaymath}
\Omega=Z  \times Z \hspace{5mm} \text{and} \hspace{5mm} \Gamma=W \times W,
\end{displaymath}
where $Z$ and $W$ are subdomains of $\CC$ containing $0$. For a function
\begin{displaymath}
F(x,y):=
\begin{bmatrix}
f_1(x,y)\\
f_2(x,y)
\end{bmatrix}
\end{displaymath}
from $\Omega$ or $\Gamma$ to $\mathbb{C}^2$, we denote
\begin{displaymath}
 p_1F(x):=f_1(x,0) \hspace{5mm} \text{and} \hspace{5mm}  p_2F(x):=f_2(x,0).
\end{displaymath}
For a pair $\Sigma = (F, G)$, define the \emph{rescaling map} as
\begin{equation} \label{eq:rescaling2}
\Lambda(\Sigma) := (s_\Sigma^{-1} \circ F \circ s_\Sigma, s_\Sigma^{-1} \circ G \circ s_\Sigma),
\end{equation}
where
\begin{displaymath}
s_\Sigma(x,y) := (\lambda_\Sigma x, \lambda_\Sigma y)
\hspace{5mm} \text{and} \hspace{5mm}
\lambda_\Sigma :=  p_1 G(0).
\end{displaymath}

\begin{defn}
For $0<\kappa \leq \infty$, we say that $\Sigma \in O(\Omega, \Gamma)$ is a {\it $\kappa$-critical pair} if $ p_1 A$ and $ p_1 B$ have a simple unique critical point which is contained in a $\kappa$-neighbourhood of $0$. The space of $\kappa$-critical pairs in $O(\Omega, \Gamma)$ is denoted by $\cC_2(\Omega,\Gamma, \kappa)$.
\end{defn}

\begin{defn}
We say that $\Sigma=(A,B) \in \mathcal{C}_2(\Omega, \Gamma, \kappa)$ is a \emph{commuting pair} if
\begin{displaymath}
A \circ B = B \circ A.
\end{displaymath}
\end{defn}

\begin{defn}
We say that $\Sigma= (A, B) \in \mathcal{C}_2(\Omega,\Gamma, \kappa)$ is an {\it almost commuting pair} if
\begin{displaymath}
\frac{d^i  p_1[A, B]}{dx^i}(0) := \frac{d^i  p_1(A \circ B - B \circ A)}{dx^i}(0) = 0
\hspace{5mm} \text{for} \hspace{5mm}
i = 0, 2,
\end{displaymath}
and
\begin{displaymath}
 p_1 B(0)=1.
\end{displaymath}
The space of almost commuting pairs in $\mathcal{C}_2(\Omega,\Gamma, \kappa)$ is denoted by $\cB_2(\Omega,\Gamma, \kappa)$.
\end{defn}

\begin{prop}[cf.\cite{GaYa2}]
The space $\cB_2(\Omega,\Gamma, \kappa)$ has the structure of an immersed Banach submanifold of $O(\Omega,\Gamma)$ of codimension $3$.
\end{prop}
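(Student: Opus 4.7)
The plan is to realize $\cB_2(\Omega,\Gamma,\kappa)$, inside the open subset $\cC_2(\Omega,\Gamma,\kappa) \subset O(\Omega,\Gamma)$, as the zero locus of an analytic map into $\CC^3$ whose derivative is surjective at every point of the zero locus, and then to apply the analytic implicit function theorem in Banach spaces. Concretely, define
$$\Psi : \cC_2(\Omega,\Gamma,\kappa) \to \CC^3,\qquad \Psi(A,B) = \Bigl(p_1 B(0) - 1,\; p_1[A,B](0),\; \tfrac{d^2 p_1[A,B]}{dx^2}(0)\Bigr),$$
so that $\cB_2 = \Psi^{-1}(0)$. The map $\Psi$ is Fr\'echet-analytic: its first coordinate is the bounded linear evaluation $F \mapsto f_1(0,0)$ (bounded by Cauchy's integral formula, since $(0,0)$ is interior to $\Omega$ and $\Gamma$); its last two coordinates are polynomial of degree two in $\Sigma$, built from the bounded bilinear composition operator $(F,G) \mapsto F\circ G$ on suitable nested subdomains, followed by the bounded derivative-evaluation functionals $H \mapsto H(0)$ and $H \mapsto H''(0)$.

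The crux is to show that $D\Psi|_{\Sigma_0}$ is surjective at every $\Sigma_0 = (A_0,B_0) \in \cB_2$. Because the target $\CC^3$ is finite-dimensional, it suffices to exhibit three test perturbations $\dot\Sigma_1,\dot\Sigma_2,\dot\Sigma_3 \in O(\Omega,\Gamma)$ whose images under $D\Psi|_{\Sigma_0}$ are linearly independent in $\CC^3$. I would restrict attention to perturbations whose second coordinates vanish and whose first coordinates depend only on $x$, namely $\dot A = (\alpha(x),0)$ and $\dot B = (\beta(x),0)$ for polynomial $\alpha,\beta$. A direct chain-rule computation then expresses the linearizations of $p_1[A,B](0)$ and of $(d^2/dx^2)p_1[A,B](0)$ as explicit polynomial expressions in the values and first few derivatives of $\alpha,\beta$ at $0$ and at the points $B_0(0)$ and $A_0(0)$, weighted by derivatives of $A_0,B_0$ at those same points. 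Using the natural triple $\dot\Sigma_1 = (0,(1,0))$, $\dot\Sigma_2 = (0,(x,0))$, $\dot\Sigma_3 = (0,(x^2,0))$ (or equally convenient small modifications), an elementary column reduction brings the resulting $3\times 3$ matrix into upper-triangular form whose diagonal entries are nonzero by virtue of the normalization $p_1 B_0(0)=1$ and the non-degeneracy of the critical points of $p_1 A_0, p_1 B_0$ guaranteed by membership in $\cC_2$.

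With surjectivity established, the kernel $K := \ker D\Psi|_{\Sigma_0}$ is a closed subspace of codimension $3$ in $O(\Omega,\Gamma)$, automatically complemented by the $3$-dimensional span of $\dot\Sigma_1,\dot\Sigma_2,\dot\Sigma_3$. The analytic implicit function theorem then presents a neighborhood of $\Sigma_0$ in $\cB_2$ as the graph of an analytic map from a neighborhood of $0$ in $K$ to this complement; patching these charts endows $\cB_2(\Omega,\Gamma,\kappa)$ with the structure of an analytic Banach manifold, and the inclusion into $O(\Omega,\Gamma)$ becomes an analytic immersion (locally an embedding) of codimension $3$. The main obstacle is the non-degeneracy check in the surjectivity step: composition of vector-valued functions on $\CC^2$ produces many cross terms that could in principle conspire to kill the determinant of the $3\times 3$ matrix, and one must rule this out using only the standing hypotheses on $\Sigma_0$. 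Once this is in place, the drop in codimension from $5$ in the one-dimensional case (cf.\ \cite{GaYa2}) to $3$ here is fully explained: the critical-pair condition is now \emph{open} (the critical point may float freely in a $\kappa$-neighborhood of $0$) rather than a pair of equations pinning the critical point to $0$, which deletes exactly two codimensions.
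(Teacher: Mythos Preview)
The paper does not supply its own proof of this proposition; it simply refers the reader to \cite{GaYa2}. Your approach---realizing $\cB_2$ inside the open set $\cC_2(\Omega,\Gamma,\kappa)$ as the zero locus of an analytic map $\Psi$ to $\CC^3$ and invoking the Banach-space implicit function theorem---is the standard one and is undoubtedly what the argument in \cite{GaYa2} amounts to. Your closing explanation for the drop in codimension from $5$ to $3$ (the two critical-point constraints become open conditions once the critical point is allowed to float in $D_\kappa$) is exactly right.

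One concrete caution about the surjectivity step. With your specific test directions $\dot\Sigma_j=(0,(x^{j-1},0))$, the linearization of the second coordinate $p_1[A,B](0)$ in the direction $(0,(\beta,0))$ works out to
\[
\beta(0)\,\partial_x a_1\bigl(B_0(0,0)\bigr)\;-\;\beta\bigl(p_1 A_0(0)\bigr),
\]
which for $\beta(x)=x$ and $\beta(x)=x^2$ vanishes whenever $p_1 A_0(0)=0$. Neither the normalization $p_1 B_0(0)=1$ nor the simplicity of the critical points of $p_1 A_0,\,p_1 B_0$ excludes this, so the $3\times 3$ matrix can degenerate with your particular triple. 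The fix is the one you already allow for (``equally convenient small modifications''): throw in perturbations of $A$ as well. For instance $\dot\Sigma=((x,0),0)$ contributes $\alpha(p_1 B_0(0))=\alpha(1)=1$ to the second coordinate of $D\Psi$ regardless of the value of $p_1 A_0(0)$, and with this extra freedom the rank computation goes through without any hidden nondegeneracy hypotheses. Once that is in place, the rest of your outline (complemented kernel, local graph charts, immersion of codimension~$3$) is routine.
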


For $0< \epsilon, \delta \leq \infty$, let $O(\Omega, \Gamma, \epsilon, \delta)$ be the open subset of $O(\Omega, \Gamma)$ consisting of pairs $\Sigma=(A,B)$ such that the following holds:
\begin{enumerate}
\item $\|\Sigma\|_y < \epsilon$, and
\item $\|\Sigma\|_{\text{diag}} < \delta$.
\end{enumerate}
%Note that
%\begin{displaymath}
%O(\Omega, \Gamma, \infty, \infty) \equiv O(\Omega, \Gamma).
%\end{displaymath}
We denote
\begin{equation}\label{2d crit pair space}
\mathcal{C}_2(\Omega, \Gamma, \epsilon, \delta, \kappa) := O(\Omega, \Gamma, \epsilon, \delta) \cap \mathcal{C}_2(\Omega, \Gamma, \kappa), 
\end{equation}
and
\begin{equation}\label{2d pair space}
\mathcal{B}_2(\Omega, \Gamma, \epsilon, \delta, \kappa) := O(\Omega, \Gamma, \epsilon, \delta) \cap \mathcal{B}_2(\Omega, \Gamma,\kappa).
\end{equation}

\begin{prop} [cf. \cite{GaYa2}] \label{renormalization projection}
If $\epsilon$, $\delta$, and $\kappa$ are sufficiently small, then there exists an analytic map $\Pi_{\text{\emph{ac}}} : C_2(\Omega, \Gamma, \epsilon, \delta, \kappa) \to \mathcal{B}_2(\Omega, \Gamma, \epsilon, \delta, \kappa)$ such that
\begin{equation}
\Pi_{\text{\emph{ac}}}|_{\mathcal{B}_2(\Omega, \Gamma, \epsilon, \delta, \kappa)} \equiv \text{ \emph{Id}}.
\end{equation}
\end{prop}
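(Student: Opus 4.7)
\medskip\noindent\textbf{Proof proposal.}

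The plan is to obtain $\Pi_{\text{ac}}$ from the analytic implicit function theorem in Banach spaces, applied to an explicit three-parameter perturbation transverse to $\cB_2$. Package the three defining equations of $\cB_2$ inside $\cC_2$ as the zero set of an analytic map
\begin{equation*}
\Psi : O(\Omega,\Gamma) \to \CC^3,\qquad \Psi(\Sigma) := \Bigl(p_1 B(0) - 1,\ p_1[A,B](0),\ \tfrac{d^2 p_1[A,B]}{dx^2}(0)\Bigr),
\end{equation*}
where $\Sigma = (A,B)$ and $[A,B] := A \circ B - B \circ A$. Each component of $\Psi$ is a bounded evaluation-at-$0$ of a derivative of an analytic composition, so $\Psi$ is analytic on $O(\Omega,\Gamma)$. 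By construction $\cB_2 = \cC_2 \cap \Psi^{-1}(0)$, and the preceding proposition (codimension-$3$ immersed Banach submanifold) is exactly the assertion that $D\Psi$ is surjective with a bounded right inverse at each point of $\cB_2$, an open condition.

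Next, I would exhibit an explicit three-parameter family of perturbations realizing that right inverse. For $t=(t_1,t_2,t_3)\in \CC^3$, set
\begin{equation*}
v(t)(x,y) := \Bigl( (t_2 + t_3 x^2,\ 0)^T,\ (t_1,\ 0)^T \Bigr) \in O(\Omega,\Gamma),
\end{equation*}
so that adding $v(t)$ to $\Sigma$ perturbs $p_1 A$ by a degree-$2$ polynomial and $p_1 B(0)$ by $t_1$, while leaving the $y$-components untouched. A direct chain-rule computation of the $3\times 3$ matrix $M(\Sigma) := \partial_t \Psi(\Sigma + v(t))\big|_{t=0}$ shows that $M(\Sigma)$ is nondegenerate whenever $\Sigma$ is the trivial $2$-d extension $((\eta,0),(\xi,0))$ of a $1$-d almost-commuting pair $(\eta,\xi) \in \cB$ attached to a $P_\mu$-type quadratic (see \eqref{Plambda}); this is the $2$-d counterpart of the codimension computation underlying the $1$-d proposition for $\cB(Z,W)$. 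Since $M$ depends analytically on $\Sigma$ and $\cC_2(\Omega,\Gamma,\epsilon,\delta,\kappa)$ lies in an $O(\epsilon+\delta)$-neighborhood of the $1$-d slice, shrinking $\epsilon, \delta, \kappa$ makes $M(\Sigma)$ uniformly invertible with bounded inverse.

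With $M$ uniformly invertible, the analytic implicit function theorem produces a unique analytic map $t : \cC_2(\Omega,\Gamma,\epsilon,\delta,\kappa) \to \CC^3$ satisfying $\Psi(\Sigma + v(t(\Sigma))) \equiv 0$, and uniqueness forces $t(\Sigma) = 0$ on $\cB_2$. Setting $\Pi_{\text{ac}}(\Sigma) := \Sigma + v(t(\Sigma))$ gives an analytic retraction onto $\Psi^{-1}(0)$ that restricts to the identity on $\cB_2$. The main obstacle is verifying that $\Pi_{\text{ac}}$ actually lands in $\cC_2(\Omega,\Gamma,\epsilon,\delta,\kappa)$ rather than only in $O(\Omega,\Gamma)$: the IFT gives the quantitative bound $|t(\Sigma)| \lesssim \|\Psi(\Sigma)\| = O(\epsilon+\delta)$, and since $v(t)$ is a low-degree polynomial perturbing only the $x$-coordinate, both $\|v(t)\|_y$ and $\|v(t)\|_{\text{diag}}$ are controlled by $C|t|$. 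Hence the $\epsilon$- and $\delta$-constraints survive a final shrinking, and the $\kappa$-criticality persists because a small quadratic perturbation of $p_1 A$ displaces its simple critical point only slightly, keeping it in the $\kappa$-neighborhood of $0$.
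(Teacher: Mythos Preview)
The paper does not supply a proof of this proposition; it is imported from \cite{GaYa2} via the ``cf.''\ citation. Your implicit-function-theorem strategy --- package the three defining conditions of $\cB_2$ into an analytic $\Psi:O(\Omega,\Gamma)\to\CC^3$, exhibit an explicit three-parameter transversal $v(t)$, and solve $\Psi(\Sigma+v(t))=0$ for $t=t(\Sigma)$ --- is the standard construction and is almost certainly what \cite{GaYa2} does as well.

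The gap is in your landing argument. You assert $|t(\Sigma)|\lesssim\|\Psi(\Sigma)\|=O(\epsilon+\delta)$, but the second estimate is false: shrinking $\epsilon,\delta,\kappa$ forces $\Sigma$ close to $\iota(\cC(Z,W))$, the diagonal embedding of one-dimensional \emph{critical} pairs, and $\cB(Z,W)$ sits inside $\cC(Z,W)$ with codimension three. Hence $\|\Psi(\Sigma)\|$ is generically of order one on $\cC_2(\Omega,\Gamma,\epsilon,\delta,\kappa)$ no matter how small the parameters are, so $t(\Sigma)$ need not be small. With your off-diagonal choice of $v$ this immediately destroys the $\delta$-bound, since $\|v(t)\|_{\text{diag}}\asymp|t|$; that particular defect is cured by taking the diagonal perturbation $v(t)=\iota\bigl((t_2+t_3x^2,\,t_1)\bigr)$ instead, which has $\|v\|_y=\|v\|_{\text{diag}}=0$ identically and the same transversality up to $O(\epsilon)$. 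Even then, however, the $\kappa$-bound can fail: adding $t_3x^2$ with $|t_3|\asymp 1$ to $p_1A$ can push its critical point out of the $\kappa$-neighbourhood of $0$. To close the argument you must either allow the target parameters to dilate by a fixed constant (harmless for the only use of $\Pi_{\text{ac}}$ in this paper, where the input $\Lambda\circ\Pi_{\text{crit}}\circ p\mathbf R(\Sigma)$ is already $O(\epsilon)$-close to $\cB_2$), or restrict the domain of $\Pi_{\text{ac}}$ to a tubular neighbourhood of $\cB_2$ from the outset.
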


\begin{lemma} \label{4th 1d renormalization}
Consider the sequence of multi-indexes $\{\overline{\alpha}_0, \overline{\alpha}_1, \ldots\} \subset \mathcal{I}$ defined by \eqref{renormalization iterate}. Let $\zeta_0 = (\eta_0, \xi_0) \in \mathcal{B}(Z,W)$ be a four times 1D renormalizable pair. There exists a neighbourhood $\mathcal{N}(\zeta_0) \subset \mathcal{H}(Z, W)$ of $\zeta_0$ such that for any pair $\zeta = (\eta, \xi)$ in $\mathcal{N}(\zeta_0)$, the pair
\begin{displaymath}
\mathcal{R}^4(\zeta) = \Lambda(p\mathcal{R}^4(\zeta)) := (s_{p\mathcal{R}^4(\zeta)}^{-1} \circ \zeta^{\overline{\alpha}_4} \circ s_{p\mathcal{R}^4(\zeta)}, s_{p\mathcal{R}^4(\zeta)}^{-1} \circ \zeta^{\overline{\alpha}_3} \circ s_{p\mathcal{R}^4(\zeta)}),
\end{displaymath}
is a well defined element of $\mathcal{H}(Z, W)$.
\end{lemma}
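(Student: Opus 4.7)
\medskip
\noindent\textbf{Proof proposal.} The strategy is to establish that four-times $1$D renormalizability is an open condition in $\mathcal{H}(Z, W)$, by combining continuity of composition in the sup-norm with the compact-containment that one already has at the base pair $\zeta_0$.

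Since $\zeta_0$ is four-times $1$D renormalizable, for every $k \in \{0, 1, 2, 3, 4\}$ the word $\zeta_0^{\overline{\alpha}_k}$ is defined as a well-posed chain of compositions: each intermediate image lies \emph{strictly} inside the domain of the next factor, and the rescaling factors $\lambda_{p\mathcal{R}^k(\zeta_0)}$ for $k = 1, 2, 3, 4$ are all nonzero. In particular, the rescaled disks $s_{p\mathcal{R}^4(\zeta_0)}^{-1}(Z)$ and $s_{p\mathcal{R}^4(\zeta_0)}^{-1}(W)$ are compactly contained in the natural domains on which $\zeta_0^{\overline{\alpha}_4}$ and $\zeta_0^{\overline{\alpha}_3}$ are defined.

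The key analytic input is the standard fact that if $F : U \to U'$ and $G : U' \to \mathbb{C}$ are bounded analytic with $\overline{F(U)} \subset U'$, then the composition $(F, G) \mapsto G \circ F$ is continuous in the sup-norm. Applying this inductively along the chains of compositions that build $\zeta_0^{\overline{\alpha}_3}$ and $\zeta_0^{\overline{\alpha}_4}$, I would deduce that for $\zeta = (\eta, \xi)$ sufficiently close to $\zeta_0$ in $\mathcal{H}(Z, W)$, every intermediate image of $\zeta$ remains compactly inside the subsequent domain. Consequently, every subword $\zeta^{\overline{\omega}}$ with $\overline{\omega} \preceq \overline{\alpha}_4$ is well defined and depends continuously on $\zeta$.

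To finish, the scaling factor $\lambda_{p\mathcal{R}^4(\zeta)} = \zeta^{\overline{\alpha}_3}(0)$ is continuous in $\zeta$ and nonzero at $\zeta_0$, so shrinking $\mathcal{N}(\zeta_0)$ if necessary, it stays bounded away from $0$; then the rescaled disks $s_{p\mathcal{R}^4(\zeta)}^{-1}(Z)$ and $s_{p\mathcal{R}^4(\zeta)}^{-1}(W)$ vary continuously with $\zeta$ and remain inside the domains of $\zeta^{\overline{\alpha}_4}$ and $\zeta^{\overline{\alpha}_3}$ respectively. This gives $\mathcal{R}^4(\zeta) = \Lambda(p\mathcal{R}^4(\zeta)) \in \mathcal{H}(Z, W)$. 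The main obstacle, and really the only subtlety, is the bookkeeping of nested rescalings: $\overline{\alpha}_4$ encodes a long word whose intermediate images live on geometrically disparate scales governed by $\lambda_{p\mathcal{R}^k(\zeta_0)}$, but strict compact containment at $\zeta_0$ propagates by continuity over the finite number of composition steps involved, so no new idea is required beyond the perturbative argument above.
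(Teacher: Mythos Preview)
The paper states this lemma without proof, treating it as a routine consequence of the definitions (it is followed only by the remark displaying the explicit word $p\mathcal{R}^4(\zeta) = (\eta \circ \xi \circ \eta^2 \circ \xi \circ \eta \circ \xi \circ \eta,\ \eta \circ \xi \circ \eta^2 \circ \xi)$). Your argument is exactly the expected one: well-definedness of $\mathcal{R}^4$ at $\zeta_0$ amounts to a finite list of open conditions---compact containment of each intermediate image in the domain of the next factor, and nonvanishing of the rescaling constant $\lambda_{p\mathcal{R}^4(\zeta)}$---each of which persists under small sup-norm perturbations by continuity of composition and evaluation. So your proposal is correct, and since the paper supplies no proof there is nothing substantive to compare against; the authors evidently regarded this openness argument as standard.
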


It is instructive to note that
\begin{displaymath}
p\mathcal{R}^4(\zeta) = (\zeta^{\overline{\alpha}_4}, \zeta^{\overline{\alpha}_3}) = (\eta \circ \xi \circ \eta^2 \circ \xi \circ \eta \circ \xi \circ \eta, \eta \circ \xi \circ \eta^2 \circ \xi)
\end{displaymath}

Let $\mathcal{D}(\Omega, \Gamma, 0)$ be the subset of $O(\Omega, \Gamma)$ consisting of pairs $\Sigma=(A,B)$ such that the following holds:
\begin{enumerate}
\item The functions $A: \Omega \to \mathbb{C}^2$ and $B : \Gamma \to \mathbb{C}^2$ are of the form
\begin{displaymath}
A(x,y) =
\begin{bmatrix}
\eta(x) \\
h(x)
\end{bmatrix}
\hspace{5 mm} \text{and} \hspace{5mm}
B(x,y) =
\begin{bmatrix}
\xi(x) \\
g(x)
\end{bmatrix}.
\end{displaymath}
\item The pair $\zeta := (\eta, \xi)$ is contained in $\mathcal{B}(Z, W)$ and is four-times 1D renormalizable.
\item The function $g$ is conformal on $\eta^2 \circ \xi \circ \eta \circ \xi \circ \eta(U)$ and $\eta^2 \circ \xi(U)$, where
\begin{displaymath}
U:= \lambda_{p\mathcal{R}^4(\zeta)} Z \cup W.
\end{displaymath}
\end{enumerate}

Let $\mathcal{D}(\Omega, \Gamma, \epsilon) \subset O(\Omega, \Gamma, \epsilon, \infty)$ be a neighbourhood of $\mathcal{D}(\Omega, \Gamma, 0)$ consisting of pairs $\Sigma = (A,B)$ such that $\Lambda(\tilde{\Sigma})$, where
\begin{equation} \label{eq:4th renormalization}
\tilde{\Sigma}:= (\Sigma^{\overline{\alpha}_4}, \Sigma^{\overline{\alpha}_3}),
\end{equation}
is a well-defined element of $O(\Omega, \Gamma)$, and for $V := \lambda_{\tilde{\Sigma}} \Omega \cup \Gamma$:
\begin{enumerate}
\item $ p_1 A$ is conformal on $( p_1 A)^{-1}(V)$,
\item $ p_1 A \circ B$ is conformal on  $( p_1 A \circ B)^{-1}(V)$, and
\item $ p_2 B$ is conformal on $ p_1 A^2 \circ B \circ A \circ B \circ A(V)$ and $ p_ 1 A^2 \circ B(V)$.
\end{enumerate}

We define an isometric embedding $\iota$ of the space $\mathcal{H}(Z)$ to $\mathcal{O}(\Omega)$ as follows:
\begin{equation}
\label{eq:embed}
\iota(f)(x,y) = \iota(f)(x) :=
\begin{bmatrix}
f(x) \\
f(x)
\end{bmatrix}.
\end{equation}
We extend this definition to an isometric embedding of $\cH(Z,W)$ into $O(\Omega,\Gamma)$ as follows:
\begin{equation} \label{eq:embed pair}
\iota((\eta, \xi)) := (\iota(\eta), \iota(\xi)).
\end{equation}
Note that
\begin{displaymath}
\iota(\mathcal{B}(Z, W))=\mathcal{B}_2(\Omega, \Gamma, 0, 0, 0).
\end{displaymath}

Consider the fixed point $\zeta_*=(\eta_*, \xi_*) \in \cB(Z,W)$ of the 1D renormalization operator $\mathcal{R}$ given in \thmref{thm:gry}. Fix $\epsilon>0$, and let $\widehat{\mathcal{N}}(\iota(\zeta_*)) \subset \mathcal{D}(\Omega, \Gamma, \epsilon)$ be a neighbourhood of $\iota(\zeta_*)$ whose closure is contained in $\mathcal{D}(\Omega, \Gamma, \epsilon)$.

Let
\begin{displaymath}
\Sigma = (A, B)=
\left(
\begin{bmatrix}
a \\
h
\end{bmatrix},
\begin{bmatrix}
b \\
g
\end{bmatrix}
\right)
\end{displaymath}
be a pair contained in $\widehat{\mathcal{N}}(\iota(\zeta_*))$. Denote
\begin{displaymath}
\eta_i(x) :=  p_i A(x) \hspace{5mm} \text{and} \hspace{5mm} \xi_i(x) :=  p_i B(x) \hspace{5mm} , \hspace{5mm} \text{for } i \in \{1, 2\},
\end{displaymath}
and let
\begin{displaymath}
\zeta := (\eta_1, \xi_1).
\end{displaymath}

Denote
\begin{displaymath}
a_y(x) := a(x,y),
\end{displaymath}
and consider the following non-linear changes of coordinates:
\begin{equation} \label{eq:nonlinear change of coord}
H(x,y):=
\begin{bmatrix}
a_y(x) \\
y
\end{bmatrix}
\hspace{5mm} \text{and} \hspace{5mm}
V(x,y):=
\begin{bmatrix}
x \\
\eta_1 \circ \xi_1 \circ \xi_2^{-1}(y)
\end{bmatrix}.
\end{equation}

Observe that
\begin{displaymath}
A \circ H^{-1}(x,y) =
\begin{bmatrix}
a_y \circ a_y^{-1}(x) \\
g(a_y^{-1}(x),y)
\end{bmatrix} =
\begin{bmatrix}
x \\
g(a_y^{-1}(x),y)
\end{bmatrix}.
\end{displaymath}
Furthermore,
\begin{displaymath}
V \circ H \circ B =
\begin{bmatrix}
a_{g} \circ b \\
\eta_1 \circ \xi_1 \circ \xi_2^{-1} \circ g
\end{bmatrix}.
\end{displaymath}
Thus, we have
\begin{displaymath}
\|A \circ H^{-1} \|_y < O(\epsilon)
\hspace{5mm} \text{and} \hspace{5mm}
\|V \circ H \circ B - \iota(\eta_1 \circ \xi_1)\| < O(\epsilon)
\end{displaymath}
where defined. 

Let
\begin{displaymath}
A_1 := V \circ H \circ A^{-1} \circ \Sigma^{\overline{\alpha}_4} \circ A \circ H^{-1} \circ V^{-1},
\end{displaymath}
and
\begin{displaymath}
B_1 := V \circ H \circ A^{-1} \circ \Sigma^{\overline{\alpha}_3} \circ A \circ H^{-1} \circ V^{-1}.
\end{displaymath}
Define the \emph{pre-renormalization} of $\Sigma$ as
\begin{equation} \label{eq:pre-renormalization}
p\mathbf{R}(\Sigma)=\Sigma_1 := (A_1, B_1).
\end{equation}
By the definition of $\mathcal{D}(\Omega, \Gamma, \epsilon)$, the pair $p\mathbf{R}$ is a well-defined element of $O(\lambda_{\Sigma_1}\Omega, \lambda_{\Sigma_1}\Gamma)$. From the above inequalities, it follows that
\begin{equation} \label{eq:estimates}
\|p\mathbf{R}(\Sigma) - \iota(p\mathcal{R}^4(\zeta))\| < O(\epsilon)
\hspace{5mm} \text{and} \hspace{5mm}
\|p\mathbf{R}(\Sigma)\|_y < O(\epsilon^2).
\end{equation}

By the argument principle, if $\epsilon$ is sufficiently small, then the function $ p_1 B_1 \circ A_1$ has a simple unique critical point $c_a$ near $0$. Set
\begin{equation}\label{eq:crit projection1}
T_a(x,y):= (x + c_a, y),
\end{equation}
Likewise, the function $ p_1 T_a^{-1} \circ A_1 \circ B_1 \circ T_a$ has a simple unique critical point $c_b$ near $0$. Set
\begin{equation}\label{eq:crit projection2}
T_b(x,y):= (x + c_b, y).
\end{equation}
Note that if $\Sigma$ is a commuting pair (i.e. $A \circ B = B \circ A$), then $T_b \equiv$ Id.

Define the \emph{critical projection} of $p\mathbf{R}(\Sigma)$ as
\begin{equation} \label{eq:crit projection}
\Pi_{\text{crit}} \circ p\mathbf{R}(\Sigma) = (A_2, B_2) := (T_b^{-1} \circ T_a^{-1} \circ A_1 \circ T_a, T_a^{-1} \circ B_1 \circ T_a \circ T_b).
\end{equation}
Note that
\begin{displaymath}
0 =  p_1(B_2 \circ A_2)'(0) = ( p_1 A_2)'(0) + O(\epsilon^2),
\end{displaymath}
and likewise
\begin{displaymath}
0 =  p_1(A_2 \circ B_2)'(0) = ( p_1 B_2)'(0) + O(\epsilon^2).
\end{displaymath}
Hence,
\begin{equation} \label{eq:crit estimate}
( p_1 A_2)'(0) = O(\epsilon^2)
\hspace{5mm} \text{and} \hspace{5mm}
( p_1 B_2)'(0) = O(\epsilon^2).
\end{equation}
It follows that there exists a uniform constant $C>0$ such that the rescaled pair $\Lambda \circ \Pi_{\text{crit}} \circ p\mathbf{R}(\Sigma)$ is contained in $\mathcal{C}_2(\Omega, \Gamma, C\epsilon^2, C\epsilon, C\epsilon^2)$ (recall that this means $\Lambda \circ \Pi_{\text{crit}} \circ p\mathbf{R}(\Sigma)$ is a $C\epsilon^2$-critical pair with $C\epsilon^2$ dependence on $y$ that is $C\epsilon$ away from the diagonal; see \eqref{2d crit pair space}). 

Finally, define the \emph{2D renormalization} of $\Sigma$ as
\begin{equation}
\mathbf{R}(\Sigma) := \Pi_{\text{ac}} \circ \Lambda \circ \Pi_{\text{crit}} \circ p\mathbf{R}(\Sigma),
\end{equation}
where the projection map $\Pi_{\text{ac}}$ is given in proposition \ref{renormalization projection}.

\begin{prop} \label{commuting}
If $\Sigma = (A,B) \in \mathcal{D}(\Omega, \Gamma, \epsilon)$ is a commuting pair (i.e. $A \circ B = B \circ A$), then $\mathbf{R}(\Sigma)$ is a conjugate of $(\Sigma^{\overline{\alpha}_4}, \Sigma^{\overline{\alpha}_3})$.
\end{prop}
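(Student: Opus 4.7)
My plan is to verify directly that each of the four operations making up $\mathbf{R} = \Pi_{\text{ac}} \circ \Lambda \circ \Pi_{\text{crit}} \circ p\mathbf{R}$ either is a bona fide conjugation or collapses to the identity on the commuting pair, so that the full composition is a conjugate of $(\Sigma^{\overline{\alpha}_4}, \Sigma^{\overline{\alpha}_3})$. The two main observations that drive the argument are (i) commutativity of $A$ and $B$ is preserved under conjugation and under taking compositions (so every $\Sigma^{\overline{\omega}}$ with $\overline{\omega}\in\cI$ commutes with every other such word), and (ii) a commuting critical pair with $p_1 B(0)=1$ automatically lies in $\mathcal{B}_2(\Omega,\Gamma,\epsilon,\delta,\kappa)$, so that by \propref{renormalization projection} the final step $\Pi_{\text{ac}}$ is the identity.

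First I would unwind the definition of $p\mathbf{R}$: writing $\Psi := V \circ H \circ A^{-1}$, formula \eqref{eq:pre-renormalization} says precisely $p\mathbf{R}(\Sigma) = (\Psi \circ \Sigma^{\overline{\alpha}_4} \circ \Psi^{-1}, \Psi \circ \Sigma^{\overline{\alpha}_3} \circ \Psi^{-1})$, which is the conjugate of $(\Sigma^{\overline{\alpha}_4},\Sigma^{\overline{\alpha}_3})$ by $\Psi$. Since $A$ commutes with $B$, any two words in $A$ and $B$ commute, so $(\Sigma^{\overline{\alpha}_4},\Sigma^{\overline{\alpha}_3})$ is a commuting pair; conjugation preserves this, so $(A_1,B_1)$ commutes as well. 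Next, the text explicitly records that $T_b\equiv\text{Id}$ when $\Sigma$ is commuting, so the critical projection \eqref{eq:crit projection} becomes $(T_a^{-1}\circ A_1\circ T_a,\, T_a^{-1}\circ B_1\circ T_a)$, i.e., a conjugation by the translation $T_a$; and the rescaling $\Lambda$ in \eqref{eq:rescaling2} is conjugation by the linear map $s_{\Sigma_1}$. Thus $\Lambda\circ\Pi_{\text{crit}}\circ p\mathbf{R}(\Sigma)$ is the conjugate of $(\Sigma^{\overline{\alpha}_4},\Sigma^{\overline{\alpha}_3})$ by $\Phi := s_{\Sigma_1}^{-1}\circ T_a^{-1}\circ \Psi$, and in particular remains a commuting pair.

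It remains to show this rescaled pair lies in $\mathcal{B}_2(\Omega,\Gamma,\epsilon,\delta,\kappa)$, because then \propref{renormalization projection} yields $\Pi_{\text{ac}}(\Lambda\circ\Pi_{\text{crit}}\circ p\mathbf{R}(\Sigma)) = \Lambda\circ\Pi_{\text{crit}}\circ p\mathbf{R}(\Sigma)$, completing the proof. Membership in $\mathcal{C}_2$ with the stated $\kappa$-control follows from the discussion preceding \eqref{eq:crit estimate}. The almost-commuting derivative conditions on $p_1[A,B]$ are trivially satisfied since $[A,B]\equiv 0$ for any commuting pair and any derivative of the zero function vanishes. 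Finally, the normalization $p_1 B(0)=1$ is exactly the effect of the rescaling by $\lambda_{\Sigma_1}=p_1 G(0)$: a direct computation gives $p_1(s_{\Sigma_1}^{-1}\circ G\circ s_{\Sigma_1})(0,0)= \lambda_{\Sigma_1}^{-1}p_1 G(0)=1$.

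The bookkeeping above is essentially mechanical; the only subtle point I expect to have to address is ensuring that the domains on which $\Psi$, $T_a$, and $s_{\Sigma_1}$ are defined match up so that the conjugations make sense as elements of $O(\Omega,\Gamma)$, but this is built into the definition of $\mathcal{D}(\Omega,\Gamma,\epsilon)$ and the hypothesis $\Sigma \in \mathcal{D}(\Omega, \Gamma, \epsilon)$. Thus the proposition reduces to a transparent bookkeeping of conjugations together with the triviality of the commutator identities in the commuting case.
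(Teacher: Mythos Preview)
The paper states this proposition without proof; it is evidently regarded as an immediate consequence of the construction of $\mathbf{R}$ together with the remark, recorded just before the statement, that $T_b\equiv\text{Id}$ whenever $\Sigma$ is commuting. Your proposal correctly supplies exactly this implicit argument: $p\mathbf{R}$, $\Pi_{\text{crit}}$ (with $T_b=\text{Id}$), and $\Lambda$ are each honest conjugations, and the final $\Pi_{\text{ac}}$ collapses to the identity because the resulting commuting, normalized pair already lies in $\mathcal{B}_2$, so \propref{renormalization projection} applies. One minor notational point: the scaling factor in your $\Phi$ should be taken from the pair \emph{after} the critical projection (your ``$s_{\Sigma_1}$'' is the paper's $s$ with $\lambda=p_1 B_2(0)$, not $p_1 B_1(0)$); this does not affect the argument.
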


\begin{thm}
\label{thm:hyperb2}
Let $\zeta_*$ be the fixed point of the 1D renormalization given in \thmref{thm:gry}. For any sufficiently small $\epsilon >0$, let $\widehat{\mathcal{N}}(\iota(\zeta_*)) \subset \mathcal{D}(\Omega, \Gamma, \epsilon)$ be a neighbourhood of $\iota(\zeta_*)$ whose closure is contained in $\mathcal{D}(\Omega, \Gamma, \epsilon)$ . Then there exists a uniform constant $C>0$ depending on $\widehat{\mathcal{N}}(\iota(\zeta_*))$ such that the 2D renormalization operator
$$\mathbf{R} : \mathcal{D}(\Omega, \Gamma, \epsilon) \to O(\Omega, \Gamma),$$
is a well-defined compact analytic operator satisfying the following properties:

\begin{enumerate}
\item$\mathbf{R}|_{\widehat{\mathcal{N}}(\iota(\zeta_*))} : \widehat{\mathcal{N}}(\iota(\zeta_*)) \to \mathcal{B}_2(\Omega, \Gamma, C\epsilon^2, C\epsilon, C\epsilon^2)$.

\item If $\Sigma = (A, B) \in \widehat{\mathcal{N}}(\iota(\zeta_*))$ and $\zeta := ( p_1 A,  p_1 B)$, then
\begin{displaymath}
\|\mathbf{R}(\Sigma) - \iota(\mathcal{R}^4(\zeta))\| < C \epsilon.
\end{displaymath}
Consequently, if $\cN(\zeta_*) \subset \mathcal{B}(Z, W)$ is a neighbourhood of $\zeta_*$ such that $\iota(\cN(\zeta_*)) \subset \widehat{\mathcal{N}}(\iota(\zeta_*))$, then
\begin{displaymath}
\mathbf{R} \circ \iota|_{\cN(\zeta_*)} \equiv \iota \circ \mathcal{R}^4|_{\cN(\zeta_*)}.
\end{displaymath}

\item The pair $\iota(\zeta_*)$ is the unique fixed point of $\mathbf{R}$ in $\widehat{\mathcal{N}}(\iota(\zeta_*))$.

\item The differential $D_{\iota(\zeta_*)}\mathbf{R}$ is a compact linear operator whose spectrum coincides with that of  $D_{\zeta_*}\mathcal{R}^4$. More precisely, in the spectral decomposition of  $D_{\iota(\zeta_*)}\mathbf{R}$, the complement to the tangent space $T_{\iota(\zeta_*)}(\iota(\cN(\zeta_*)))$ corresponds to the zero eigenvalue.
\end{enumerate}
\end{thm}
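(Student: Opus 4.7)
My plan is to treat the theorem as a direct exploitation of the bounds assembled in the setup, namely \eqref{eq:estimates} and \eqref{eq:crit estimate}, together with the functorial properties of $\iota$, $\Pi_{\text{crit}}$, $\Pi_{\text{ac}}$, and $\Lambda$. The key point is that $\mathbf{R}$ is engineered so that on the embedded 1D subspace it reduces to $\iota \circ \mathcal{R}^4 \circ \iota^{-1}$, while transversely to this subspace it gains a full factor of $\epsilon$ in the size of the $y$-dependence and the diagonal deviation. For part (1), the rescaled pair $\Lambda \circ \Pi_{\text{crit}} \circ p\mathbf{R}(\Sigma)$ already belongs to $\mathcal{C}_2(\Omega, \Gamma, C\epsilon^2, C\epsilon, C\epsilon^2)$ by those estimates. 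Since $\Pi_{\text{ac}}$ is an analytic retraction onto the codimension-$3$ submanifold $\mathcal{B}_2$, the displacement $\|\Sigma - \Pi_{\text{ac}}(\Sigma)\|$ is governed by the failure of almost commutation, which at a $C\epsilon^2$-critical pair is itself of order $\epsilon^2$ in the relevant jets; this preserves the three norms up to an enlargement of $C$.

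For part (2), when $\Sigma = \iota(\zeta)$ the function $a_y$ in \eqref{eq:nonlinear change of coord} is independent of $y$ and $\xi_1 = \xi_2$, so $H$ and $V$ reduce to the identity; moreover the commutator $[A,B]$ vanishes identically, forcing $T_a = T_b = \text{Id}$, and $\Pi_{\text{ac}}$ acts trivially on $\iota(\mathcal{B}(Z,W))$. Hence $\mathbf{R}(\iota(\zeta)) = \iota(\mathcal{R}^4(\zeta))$. The $O(\epsilon)$-approximation for a general $\Sigma$ then follows from \eqref{eq:estimates} by propagating through the analytic, Lipschitz operations $\Pi_{\text{crit}}$, $\Lambda$, and $\Pi_{\text{ac}}$. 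Part (3) follows immediately: $\iota(\zeta_*)$ is fixed because $\mathcal{R}(\zeta_*) = \zeta_*$ implies $\mathcal{R}^4(\zeta_*) = \zeta_*$. For uniqueness, any fixed point $\Sigma \in \widehat{\mathcal{N}}(\iota(\zeta_*))$ satisfies $\|\Sigma\|_y = \|\mathbf{R}(\Sigma)\|_y < C\|\Sigma\|_y^2$, so $\|\Sigma\|_y = 0$; a similar collapse of the diagonal deviation forces $\Sigma \in \iota(\mathcal{B}(Z,W))$, reducing uniqueness to that of $\zeta_*$ for $\mathcal{R}$ from \thmref{1d renormalization}.

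Part (4) is the heart of the result. I decompose
\begin{displaymath}
T_{\iota(\zeta_*)}O(\Omega, \Gamma) = T_{\iota(\zeta_*)}\iota(\cN(\zeta_*)) \oplus T^{\perp},
\end{displaymath}
where $T^{\perp}$ consists of infinitesimal perturbations that introduce $y$-dependence or break the diagonal form. Differentiating the identity of part (2) at $\zeta_*$ shows $D_{\iota(\zeta_*)}\mathbf{R}$ restricted to the tangential summand is conjugate via $D\iota$ to $D_{\zeta_*}\mathcal{R}^4$, which is compact with spectrum described in \thmref{1d renormalization}. On $T^{\perp}$, the quadratic bound $\|\mathbf{R}(\Sigma)\|_y = O(\|\Sigma\|_y^2)$ and the analogous contraction of the diagonal deviation force the linearization on $T^{\perp}$ to vanish; compactness of $D\mathbf{R}$ is inherited from the definite rescaling by a factor strictly smaller than $1$ combined with analyticity. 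I expect the main obstacle to be rigorously justifying that $D\mathbf{R}|_{T^{\perp}} = 0$ in the presence of the nonlinear changes of coordinates $H$ and $V$, which themselves depend on the perturbation. The cleanest route is to linearize $H$ and $V$ at $\iota(\zeta_*)$, where both equal the identity, and verify case by case (pure $y$-dependence perturbation and pure diagonal perturbation) that the first-order contribution is absorbed by the quadratic vanishing of the commutator term in the definition of $p\mathbf{R}$ together with the projection $\Pi_{\text{ac}}$.
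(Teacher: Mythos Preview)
The paper does not contain its own proof of this theorem. The statement sits in Section~2, ``Review of renormalization theory for Siegel disks,'' which opens by referring the reader to \cite{GaYa2} for details; Theorem~\ref{thm:hyperb2} is quoted from that reference, with only the preparatory estimates \eqref{eq:estimates} and \eqref{eq:crit estimate} sketched in the surrounding text. So there is nothing in this paper to compare your argument against beyond those estimates, which you are correctly invoking.

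That said, your sketch has a concrete error in part~(2). You assert that for $\Sigma=\iota(\zeta)$ the coordinate changes $H$ and $V$ ``reduce to the identity.'' They do not: when $a(x,y)=\eta(x)$ one has $H(x,y)=(\eta(x),y)$, and when $\xi_1=\xi_2$ one has $V(x,y)=(x,\eta_1(y))$. What is true is that, once one carries through the full composition $V\circ H\circ A^{-1}\circ\Sigma^{\overline{\alpha}_k}\circ A\circ H^{-1}\circ V^{-1}$ for a pair with $\|\Sigma\|_y=0$, the second coordinates of $A$ and $B$ are rendered irrelevant by the specific form of $H$ and $V$ (e.g.\ $V\circ H\circ B(x,y)=(\eta_1\circ\xi_1(x),\,\eta_1\circ\xi_1\circ\xi_2^{-1}\circ\xi_2(x))=\iota(\eta_1\circ\xi_1)(x)$), and the result collapses to $\iota(p\mathcal{R}^4(\zeta))$. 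This is the computation that actually drives both the identity $\mathbf{R}\circ\iota=\iota\circ\mathcal{R}^4$ and, after differentiation, the vanishing of $D\mathbf{R}$ on the off-diagonal part of $T^{\perp}$ that you need for part~(4). Your overall strategy is sound, but you should replace the ``identity'' claim with this explicit cancellation.
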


%%\begin{prop} \label{renormalization extension}
%%The 2D renormalization operator given in theorem \ref{thm:hyperb2} extends analytically to
%%\begin{displaymath}
%%
%%\end{displaymath}
%%The pair $\iota(\zeta_*)$ is still the unique fixed point of this extended renormalization operator.
%%\end{prop}

We denote the stable manifold of the fixed point $\iota(\zeta_*)$ for the 2D renormalization operator $\mathbf{R}$ by $W^s(\iota(\zeta_*)) \subset \mathcal{D}(\Omega, \Gamma, \epsilon)$.

Let $H_{\mu_*,\nu}$ be the H{\'e}non map with a semi-Siegel fixed point $\mathbf{q}$ of multipliers $\mu_* = e^{2\pi i \theta_*}$ and $\nu$, where $\theta_* = (\sqrt{5}-1)/2$ is the inverse golden mean rotation number, and $|\nu|<\epsilon$. We identify $H_{\mu_*, \nu}$ as a pair in $\mathcal{D}(\Omega, \Gamma, \epsilon)$ as follows:
\begin{equation} \label{henonpair}
\Sigma_{H_{\mu_*,\nu}} := \Lambda(H_{\mu_*,\nu}^2, H_{\mu_*,\nu}).
\end{equation}

The following is shown in \cite{GaRY}:

\begin{thm} \label{thm:henon}
The pair $\Sigma_{H_{\mu_*,\nu}}$ is contained in  the stable manifold $W^s(\iota(\zeta_*)) \subset \mathcal{D}(\Omega, \Gamma, \epsilon)$ of the fixed point $\iota(\zeta_*)$ for the 2D renormalization operator $\mathbf{R}$.
\end{thm}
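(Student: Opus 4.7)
The plan is to establish the convergence $\mathbf{R}^n(\Sigma_{H_{\mu_*,\nu}}) \to \iota(\zeta_*)$ by combining the classical convergence result for the 1D Siegel renormalization at the golden mean with the hyperbolic structure of $\mathbf{R}$ at $\iota(\zeta_*)$ and the super-contraction of $\mathbf{R}$ towards the 1D diagonal slice $\iota(\cB(Z,W))$.

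The one-dimensional commuting pair $\zeta_0 \in \cB(Z,W)$ built from the golden-mean Siegel quadratic $P_{\mu_*}(x) = x^{2} + \mu_*/2 - \mu_*^{2}/4$ from \eqref{Plambda} lies in the stable manifold $W^s(\zeta_*)$ of the 1D operator $\mathcal{R}$, by the classical theory summarised in Theorem \ref{1d renormalization}; so $\mathcal{R}^{4n}(\zeta_0) \to \zeta_*$. Combining Theorem \ref{thm:hyperb2}(4) with Theorem \ref{1d renormalization}(3), the differential $D_{\iota(\zeta_*)}\mathbf{R}$ is a compact operator with a single simple eigenvalue outside $\overline{\DD}$, and the rest of its spectrum compactly contained in $\DD$. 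Standard stable manifold theory for compact hyperbolic operators on a Banach manifold then produces a codimension-one local stable manifold $W^s_{\mathrm{loc}}(\iota(\zeta_*))$ inside a small neighbourhood $U \subset \widehat{\cN}(\iota(\zeta_*))$, with the property that the forward $\mathbf{R}$-orbit of any $\Sigma \in U$ which never leaves $U$ must lie on $W^s_{\mathrm{loc}} \subset W^s(\iota(\zeta_*))$. The task therefore reduces to showing that for $|\nu| < \delta$ sufficiently small, the entire forward $\mathbf{R}$-orbit of $\Sigma_{H_{\mu_*,\nu}}$ is trapped in $U$.

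To trap the orbit I would exploit the quantitative estimates in \eqref{eq:estimates} together with Theorem \ref{thm:hyperb2}(1): a single application of $\mathbf{R}$ simultaneously contracts the $y$-derivative quadratically and pushes the pair into an $O(\epsilon)$-neighbourhood of the diagonal slice, on which, by Theorem \ref{thm:hyperb2}(2), the operator $\mathbf{R}$ acts as $\iota \circ \mathcal{R}^4$. Consequently, once some iterate $\mathbf{R}^n(\Sigma_{H_{\mu_*,\nu}})$ is close to the diagonal with 1D projection near $\zeta_0$, all subsequent iterates shadow $\iota(\mathcal{R}^{4n}(\zeta_0))$, which converges to $\iota(\zeta_*)$ by the 1D skeleton. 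Continuity of $\mathbf{R}^N$ together with this 1D convergence then allows me to pick $N$ so that $\mathbf{R}^N(\Sigma_{H_{\mu_*,\nu}})$ lands deep inside $U$ for all sufficiently small $\nu$, after which the double-exponential $y$-contraction keeps the remainder of the orbit in $U$, and the hyperbolic trapping above delivers the stable-manifold conclusion.

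The main obstacle is the opening step: verifying that $\mathbf{R}$ is in fact defined on each of the intermediate iterates $\mathbf{R}^n(\Sigma_{H_{\mu_*,\nu}})$, $0 \le n \le N$, i.e.\ that each stays in $\cD(\Omega, \Gamma, \epsilon)$ and satisfies the conformality hypotheses on $p_2 B$ that appear in the definition of $\cD$. Establishing these a priori geometric bounds on the forward H\'enon orbit near the semi-Siegel fixed point is the technical heart of the argument of \cite{GaRY} and relies on the full 2D renormalization framework developed in \cite{GaYa2}.
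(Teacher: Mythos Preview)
The paper does not prove this theorem; it is imported from \cite{GaRY}, as the sentence immediately preceding the statement makes explicit. There is therefore no ``paper's own proof'' to compare your proposal against.

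That said, your outline is an accurate high-level summary of the strategy actually carried out in \cite{GaRY}: one combines the 1D renormalization convergence for the golden-mean pair built from $P_{\mu_*}$, the hyperbolicity of $\mathbf{R}$ at $\iota(\zeta_*)$ from Theorem~\ref{thm:hyperb2}, and the super-exponential contraction of $\|\cdot\|_y$ towards the diagonal slice, in order to trap the forward orbit in a neighbourhood where the local stable manifold characterisation applies. You have also correctly isolated the real difficulty --- verifying that the intermediate iterates remain in $\cD(\Omega,\Gamma,\epsilon)$ so that $\mathbf{R}$ is defined on them --- and correctly attributed its resolution to \cite{GaRY} and the machinery of \cite{GaYa2}. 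Your proposal is thus less a proof than a faithful road map to where the proof lives; nothing in it is wrong, but the substantive content (the a priori geometric bounds) is precisely what you defer to the references, just as the present paper does.
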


%\newpage

\section{Proof of Main Theorem}
\subsection{Preliminaries}
Let
\begin{displaymath}
\zeta_* = (\eta_*, \xi_*)
\end{displaymath}
be the fixed point of the 1D renormalization operator $\mathcal{R}$ given in theorem \ref{thm:gry}. By theorem \ref{thm:hyperb2}, the fixed point of the 2D renormalization operator
\begin{displaymath}
\mathbf{R} :  \widehat{\mathcal{N}}(\iota(\zeta_*)) \to \mathcal{B}_2(\Omega, \Gamma, C\epsilon^2, C\epsilon).
\end{displaymath}
is the diagonal embedding $\iota(\zeta_*)$ of $\zeta_*$. Thus, we have
\begin{displaymath}
\iota(\zeta_*)=\mathbf{R}(\iota(\zeta_*)) = (s_*^{-1} \circ \iota(\zeta)^{\overline{\alpha}_4} \circ s_*, s_*^{-1} \circ \iota(\zeta)^{\overline{\alpha}_3} \circ s_*),
\end{displaymath}
where
\begin{displaymath}
s_*(x,y) := (\lambda_* x, \lambda_* y)
\hspace{5mm} , \hspace{5mm}
|\lambda_*| < 1.
\end{displaymath}

Let $\Sigma = (A, B)$ be a pair contained in the stable manifold $W^s(\iota(\zeta_*))$ of the fixed point $\iota(\zeta_*)$. Assume that $\Sigma$ is commuting, so that
\begin{displaymath}
A \circ B = B \circ A.
\end{displaymath}
Set
\begin{displaymath}
\Sigma_n = (A_n, B_n) =
\left(
\begin{bmatrix}
a_n \\
h_n
\end{bmatrix}
,
\begin{bmatrix}
b_n \\
g_n
\end{bmatrix}
\right) :=
\mathbf{R}^n(\Sigma).
\end{displaymath}
Let
\begin{displaymath}
\eta_n(x) :=  p_1 A_n(x) = a_n(x, 0)
\hspace{5mm} \text{and} \hspace{5mm}
\xi_n(x) :=  p_1 B_n(x) = b_n(x,0).
\end{displaymath}
By theorem \ref{thm:hyperb2}, we may express
\begin{equation}\label{shadowing}
A_n = \iota(\eta_n) + E_n
\hspace{5mm} \text{and} \hspace{5mm}
B_n = \iota(\xi_n) + F_n
\end{equation}
where the error terms $E_n$ and $F_n$ satisfy
\begin{equation}\label{eq:error estimate}
\|E_n\| < C \epsilon^{2^{n-1}}
\hspace{5mm} \text{and} \hspace{5mm}
\|F_n\| < C \epsilon^{2^{n-1}}.
\end{equation}
Hence, the sequence of pairs $\{\Sigma_n\}_{n=0}^\infty$ converges to $\mathcal{B}_2(\Omega, \Gamma, 0, 0, 0)$ \emph{super-exponentially}.

Let
\begin{displaymath}
H_n(x,y) :=
\begin{bmatrix}
a_n(x,y) \\
y
\end{bmatrix}
\hspace{5mm} \text{and} \hspace{5mm}
V_n(x,y) :=
\begin{bmatrix}
x \\
\eta_n \circ \xi_n \circ ( p_2B_n)^{-1} (y)
\end{bmatrix}
\end{displaymath}
be the non-linear changes of coordinates given in \eqref{eq:nonlinear change of coord}, let
\begin{displaymath}
T_n(x,y) := (x+d_n, y),
\end{displaymath}
be the translation map given in \eqref{eq:crit projection1}, and let
\begin{displaymath}
s_n(x,y) := (\lambda_n x, \lambda_n y)
\hspace{5mm} , \hspace{5mm}
|\lambda_n| < 1
\end{displaymath}
be the scaling map so that if
\begin{equation} \label{eq:n change of coordinates}
\phi_n :=  H_n^{-1} \circ V_n^{-1} \circ T_n \circ s_n,
\end{equation}
then by proposition \ref{commuting}, we have
\begin{displaymath}
A_{n+1} = \phi_n^{-1} \circ A_n^{-1} \circ \Sigma_n^{\overline{\alpha}_4} \circ A_n \circ\phi_n
\end{displaymath}
and
\begin{displaymath}
B_{n+1} = \phi_n^{-1} \circ A_n^{-1} \circ \Sigma_n^{\overline{\alpha}_3} \circ A_n \circ \phi_n.
\end{displaymath}

Denote
\begin{displaymath}
\Phi^k_n := \phi_n \circ \phi_{n+1} \circ \ldots{} \circ \phi_{k-1} \circ \phi_k
\hspace{5mm} , \hspace{5mm}
\Omega^k_n := \Phi^k_n(\Omega)
\hspace{5mm} \text{and} \hspace{5mm}
\Gamma^k_n := \Phi^k_n(\Gamma).
\end{displaymath}
Define
\begin{displaymath}
U^k_n := \bigcup_{\overline{\omega} \prec \overline{\alpha}_{k-n}} \Sigma_n^{\overline{\omega}}(\Omega^k_n)
\hspace{5mm} \text{and} \hspace{5mm}
V^k_n := \bigcup_{\overline{\omega} \prec \overline{\alpha}_{k-n-1}} \Sigma_n^{\overline{\omega}}(\Gamma^k_n).
\end{displaymath}
It is not hard to see that $\{U^k_n \cup V^k_n\}^\infty_{k=n}$ form a nested sequence. Define the \emph{renormalization arc} of $\Sigma_n$ as 
\begin{equation} \label{siegel arc}
\gamma_n := \bigcap_{k=n}^\infty U^k_n \cup V^k_n.
\end{equation}

\begin{prop}
The renormalization arc $\gamma_n$ is invariant under the action of $\Sigma_n$. Moreover, if
\begin{displaymath}
p^k_n := \bigcup_{\overline{\omega} \prec \overline{\alpha}_{k-n}} \Sigma_n^{\overline{\omega}}(\Phi^k_n(\gamma_k \cap \Omega))
\hspace{5mm} \text{and} \hspace{5mm}
q^k_n := \bigcup_{\overline{\omega} \prec \overline{\alpha}_{k-n-1}} \Sigma_n^{\overline{\omega}}(\Phi^k_n(\gamma_k \cap \Gamma)),
\end{displaymath}
then
\begin{displaymath}
\gamma_n = p^k_n \cup q^k_n.
\end{displaymath}
\end{prop}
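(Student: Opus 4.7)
The plan is to exploit the semi-conjugacies between successive renormalizations together with the Fibonacci-type combinatorics of the multi-indices $\overline{\alpha}_m$ built into the recursion $p\mathcal{R}^{m+1}=p\mathcal{R}\circ p\mathcal{R}^m$.

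For the invariance in part (1), I would fix $x\in\gamma_n$ so that for every $k\geq n$ one may write $x=\Sigma_n^{\overline{\omega}}(y)$ with $\overline{\omega}\prec\overline{\alpha}_{k-n}$ and $y\in\Phi^k_n(\Omega)$, or the analogous expression for $V^k_n$. Applying $A_n$ or $B_n$ prepends a symbol to $\overline{\omega}$. Using the fact that $\overline{\alpha}_{m+1}$ is obtained from $\overline{\alpha}_m$ and $\overline{\alpha}_{m-1}$ by the substitution coming from $p\mathcal{R}(\eta,\xi)=(\eta\circ\xi,\eta)$, one checks that the resulting multi-index is still $\prec\overline{\alpha}_{k-n}$ (so $A_n(x)\in U^k_n$), or else can be re-indexed so that $A_n(x)\in V^k_n$. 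Since this argument is valid for every $k$, one concludes $A_n(x),B_n(x)\in\gamma_n$, and full invariance follows from the injectivity of $A_n,B_n$ on the relevant domains.

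The identity $\gamma_n=p^k_n\cup q^k_n$ in part (2) I would prove by induction on $k-n\geq 0$. The base case $k=n$ is tautological: once one unpacks that $\overline{\alpha}_0$ and $\overline{\alpha}_{-1}$ force the admissible multi-indices $\overline{\omega}$ in the unions to be trivial, the statement reduces to $\gamma_n=(\gamma_n\cap\Omega)\cup(\gamma_n\cap\Gamma)$. For the inductive step, the conjugacy $\phi_n\circ A_{n+1}\circ\phi_n^{-1}=A_n^{-1}\circ\Sigma_n^{\overline{\alpha}_4}\circ A_n$ (and its analogue for $B_{n+1}$) coming from Proposition \ref{commuting} identifies each $\Sigma_{n+1}^{\overline{\omega}}$-orbit piece at level $n+1$ with a specific union of $\Sigma_n^{\overline{\omega}'}$-orbit pieces at level $n$, pushed via $\phi_n$. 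Substituting the inductive hypothesis $\gamma_{n+1}=p^{k}_{n+1}\cup q^{k}_{n+1}$ and invoking $\gamma_n\cap\phi_n(\Omega\cup\Gamma)=\phi_n(\gamma_{n+1})$ (which follows from part (1) together with the construction of $\phi_n$) reassembles $\gamma_n$ in the required form.

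The main obstacle is the combinatorial bookkeeping: the $A_n$ factors flanking $\phi_n$ in the semi-conjugacy shift the natural indexing by a letter, so one has to verify that every multi-index $\overline{\omega}\prec\overline{\alpha}_{k-n}$ admits a unique decomposition through the substitution defined by $\overline{\alpha}_4,\overline{\alpha}_3$ into a pair $(\overline{\omega}',\overline{\omega}'')$ with $\overline{\omega}'\prec\overline{\alpha}_{k-n-1}$ and $\overline{\omega}''$ indexing the four-step pre-renormalization word. Checking that this correspondence is bijective and that the resulting union is disjoint in the appropriate sense — which is what prevents double-counting — is the delicate point, and it is where the strict partial order $\prec$ and the Fibonacci structure of the golden rotation play their essential role.
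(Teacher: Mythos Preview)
The paper states this proposition without proof, so there is no argument to compare against; your outline is being judged on its own merits.

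Your overall architecture is sound: invariance of $\gamma_n$ is indeed a combinatorial statement about how left-composition by $A_n$ or $B_n$ acts on multi-indices $\overline{\omega}\prec\overline{\alpha}_{k-n}$, and the decomposition $\gamma_n=p^k_n\cup q^k_n$ is naturally handled by induction on $k-n$, stepping through the semi-conjugacy $A_n\circ\phi_n$ and the Fibonacci substitution encoded in $(\overline{\alpha}_4,\overline{\alpha}_3)$. You have also correctly isolated the genuine difficulty: the extra $A_n$ factors in the conjugacy and the need to show that the substitution gives a \emph{bijection} between admissible words at consecutive levels.

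Two points deserve tightening. First, in the paper's convention $\zeta^{\overline{\omega}}=\phi^{a_m}\circ\cdots\circ\xi^{a_1}\circ\eta^{a_0}$, so post-composing by $A_n$ modifies the \emph{last} entry of $\overline{\omega}$ (or appends one), not the first; your word ``prepends'' is at best ambiguous and at worst backwards, and the case analysis for invariance depends on getting this right. Second, the key bridge $\gamma_n\cap\phi_n(\Omega\cup\Gamma)=\phi_n(\gamma_{n+1})$ does not follow from invariance alone: you need that $\Phi^k_n=\phi_n\circ\Phi^k_{n+1}$ together with the fact that $\phi_n^{-1}(U^k_n\cup V^k_n)\cap(\Omega\cup\Gamma)=U^k_{n+1}\cup V^k_{n+1}$, which is itself a combinatorial identity about how the $\Sigma_n$-orbit pieces over $\Omega^k_n,\Gamma^k_n$ restrict to the window $\phi_n(\Omega\cup\Gamma)$. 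Proving that identity is essentially equivalent to the bijectivity you flag in your last paragraph, so it should be stated and established explicitly rather than cited as a consequence of part~(1).
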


Let $\theta_*=(\sqrt{5}-1)/2$ be the golden mean rotation number, and let
\begin{displaymath}
I_L := [-\theta_*, 0]
\hspace{5mm} \text{and} \hspace{5mm}
I_R := [0, 1].
\end{displaymath}
Define $L : I_L \to \mathbb{R}$ and $R: I_R \to \mathbb{R}$ as
\begin{displaymath}
L(t) := t +1
\hspace{5mm} \text{and} \hspace{5mm}
R(t) := t - \theta_*.
\end{displaymath}
The pair $(R, L)$ represents rigid rotation of $\mathbb{R}/\mathbb{Z}$ by angle $\theta_*$.

The following is a classical result about the renormalization of 1D pairs.

\begin{prop}
Suppose $\|\Sigma\|_y = 0$. Then for every $n \geq 0$, there exists a quasi-symmetric homeomorphism between $I_L \cup I_R$ and the renormalization arc $\gamma_n$ that conjugates the action of $\Sigma_n=(A_n, B_n)$ and the action of $(R, L)$. Moreover, the renormalization arc $\gamma_n$ contains the unique critical point $c_n = 0$ of $\eta_n$.
\end{prop}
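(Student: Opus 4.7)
The plan is to reduce the statement to the classical one-dimensional renormalization picture. Since $\|\Sigma\|_y=0$, each component of $A$ and $B$ is independent of $y$, and in fact $\Sigma=\iota(\zeta)$ for some $\zeta=(\eta,\xi)\in\cB(Z,W)$. Statement (2) of \thmref{thm:hyperb2} then gives $\mathbf{R}\circ\iota=\iota\circ\cR^4$ in a neighbourhood of $\iota(\zeta_*)$, so along the orbit we have $\Sigma_n=\iota(\zeta_{4n})$ with $\zeta_m:=\cR^m(\zeta)$. Consequently the 2D renormalization arc $\gamma_n$ is intertwined with the one-dimensional renormalization arc of $\zeta_{4n}$ via $\iota$. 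I would first dispose of the easy assertion $0=c_n\in\gamma_n$: by construction $0\in\Omega^k_n\subset U^k_n$ for every $k\ge n$, and $0$ is the unique critical point of $\eta_n$ because $(\eta_n,\xi_n)\in\cB(Z,W)$ and this is preserved by $\cR$.

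Next I would build the conjugacy $h_n\colon I_L\cup I_R\to\gamma_n$ by matching dynamical partitions. Iterating $(R,L)$ on $[-\theta_*,1]$ produces the standard golden-mean Fibonacci partitions: at level $k$ the atoms are labelled precisely by the multi-indices $\overline{\omega}\prec\overline{\alpha}_{k-n}$ and $\overline{\omega}\prec\overline{\alpha}_{k-n-1}$ appearing in the definition \eqref{siegel arc} of $\gamma_n$. I would define $h_n$ on the level-$k$ partition by mapping the atom labelled $\overline{\omega}\prec\overline{\alpha}_{k-n}$ (respectively $\overline{\omega}\prec\overline{\alpha}_{k-n-1}$) homeomorphically onto $\Sigma_n^{\overline{\omega}}(\Omega^k_n)$ (respectively $\Sigma_n^{\overline{\omega}}(\Gamma^k_n)$), and then pass to the limit $k\to\infty$. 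Since $|\lambda_*|<1$ and $\Sigma_n\to\iota(\zeta_*)$ super-exponentially by \eqref{eq:error estimate}, the diameters of $\Omega^k_n$ and $\Gamma^k_n$ shrink to $0$, so the limit is a well-defined continuous surjection that semi-conjugates $(R,L)$ to $\Sigma_n$. Injectivity is forced by the fact that level-$k$ atoms have pairwise disjoint interiors — this is an elementary consequence of the golden-mean combinatorics on the $(R,L)$ side, which then transfers to $\gamma_n$ via the same combinatorial labelling.

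The principal remaining — and genuinely non-trivial — point is quasi-symmetry of $h_n$. This requires uniform two-sided bounds on the ratios of adjacent atoms of the dynamical partitions, i.e.\ \emph{real a priori bounds}. For the almost-commuting pairs at hand these follow from compactness of the orbit $\{\cR^m(\zeta_{4n})\}_{m\ge 0}$ in a neighbourhood of $\zeta_*$, provided by \thmref{1d renormalization}, combined with the real-bound estimates for critical circle pairs of bounded combinatorial type adapted to the almost-commuting setting in \cite{GaYa2}. Once these bounds are in force, a standard Koebe-type distortion argument applied to the compositions $\zeta_n^{\overline{\omega}}$ on atoms of the dynamical partition yields a quasi-symmetric constant for $h_n$ that is uniform in $n$.

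I expect the only real obstacle to be the extraction of the real \emph{a priori} bounds; the rest of the argument is dictated by the Fibonacci combinatorics of $\theta_*$ and reduces to careful bookkeeping. In particular, because the hypothesis $\|\Sigma\|_y=0$ strips the problem of all two-dimensional subtlety, the proposition is in substance a repackaging of the classical theorem of Herman--Swi\c{a}tek for golden-mean critical commuting pairs, transported into the almost-commuting framework via the isometric embedding $\iota$.
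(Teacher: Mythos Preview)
The paper does not supply a proof of this proposition at all: it is introduced with the sentence ``The following is a classical result about the renormalization of 1D pairs'' and left at that. So there is no proof in the paper to compare your proposal against; the authors are simply invoking the Herman--\'Swi\k{a}tek/de~Faria--de~Melo theory (cf.\ \cite{dFdM}) as a black box.

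Your outline is a reasonable sketch of how that classical result is actually proved, and you have correctly identified the essential ingredients: the Fibonacci combinatorics of the dynamical partitions, the nesting from $|\lambda_*|<1$, and quasi-symmetry via real \emph{a priori} bounds. One imprecision: from $\|\Sigma\|_y=0$ alone you cannot conclude $\Sigma=\iota(\zeta)$, since that would also require $\|\Sigma\|_{\text{diag}}=0$. What you do get is that $A(x,y)=(a_1(x),a_2(x))$ and $B(x,y)=(b_1(x),b_2(x))$; the second components are slaved to the first-coordinate dynamics, so everything reduces to the 1D pair $(\eta,\xi)=(a_1,b_1)$, and the renormalization arc $\gamma_n$ is the diagonal embedding of the 1D arc for $\zeta_n$. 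This is the reduction you want, just stated more carefully. Similarly, the assertion ``$0\in\Omega^k_n$ for every $k\ge n$'' is not as immediate as you suggest and is better argued on the 1D side, where the critical point is tracked explicitly through the rescalings. None of this affects the substance of your plan, which matches what the paper is implicitly citing.
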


The following is shown in \cite{GaRY}.

\begin{thm} \label{thm:gry2}
Let $\Sigma = (A, B)$ be a commuting pair contained in the stable manifold $W^s(\iota(\zeta_*))$ of the 2D renormalization fixed point $\iota(\zeta_*)$. Then for every $n \geq 0$, there exists a homeomorphism between $I_L \cup I_R$ and the renormalization arc $\gamma_n$ that conjugates the action of $\Sigma_n=(A_n, B_n)$ and the action of $(R, L)$. Moreover, this conjugacy cannot be $C^1$ smooth.
\end{thm}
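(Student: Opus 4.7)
The plan is to handle the two assertions separately: the topological conjugacy by a combinatorial/limit argument, and the non-$C^1$-smoothness by a scaling contradiction.

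For the existence of the conjugacy, I would build on the preceding proposition's 1D statement (valid for $\|\Sigma\|_y = 0$) using the super-exponential shadowing estimate \eqref{eq:error estimate}. By the preceding structural proposition, the renormalization arc $\gamma_n$ admits a Fibonacci-indexed decomposition $\gamma_n = p^k_n \cup q^k_n$, and the same combinatorics governs the partition of $I_L \cup I_R$ under the rigid rotation $(R, L)$; matching multi-index addresses defines a combinatorial bijection. To upgrade this to a homeomorphism, I would show that the pieces $\Sigma_n^{\overline{\omega}}(\Phi^k_n(\gamma_k \cap \Omega))$ and $\Sigma_n^{\overline{\omega}}(\Phi^k_n(\gamma_k \cap \Gamma))$ shrink uniformly to points as $k \to \infty$. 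This combines the quasi-symmetric control available in the 1D case with the bound $\|\Sigma_n - \iota(\eta_n, \xi_n)\| < C\epsilon^{2^{n-1}}$ to transfer the geometric estimates into the 2D regime.

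For non-smoothness, I would argue by contradiction: suppose $\psi_n : I_L \cup I_R \to \gamma_n$ is $C^1$. A preliminary case is that $\psi_n'(t_0) = 0$ at some $t_0$. Differentiating the conjugacy $A_n \circ \psi_n = \psi_n \circ R$ yields $\psi_n'(R(t_0)) = DA_n(\psi_n(t_0))\cdot \psi_n'(t_0) = 0$, and inductively $\psi_n'$ vanishes on the entire forward $R$-orbit of $t_0$. Since $R$ is rigid rotation by the irrational $\theta_*$, this orbit is dense in $I_L \cup I_R$; continuity of $\psi_n'$ forces $\psi_n' \equiv 0$, contradicting injectivity of $\psi_n$. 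Thus we may assume $\psi_n'$ is nowhere vanishing, making $\psi_n$ bi-Lipschitz onto its image.

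Now I invoke the scaling structure. The coordinate change $\phi_k$ from \eqref{eq:n change of coordinates} contains the linear rescaling $s_k(x,y)=(\lambda_k x, \lambda_k y)$, and by \thmref{thm:hyperb2} together with continuity, $\lambda_k \to \lambda_*$, the universal scaling of the 1D fixed point $\zeta_*$. Consequently the piece $\Phi^k_n(\gamma_k \cap \Omega)$ of $\gamma_n$ has diameter comparable to $\prod_{j=n}^{k-1}|\lambda_j| \asymp |\lambda_*|^{k-n}$. On the other hand, the corresponding sub-interval of $I_L \cup I_R$ has length comparable to $\theta_*^{k-n}$ by the golden Fibonacci scaling of the rigid rotation. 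A bi-Lipschitz conjugacy forces these two rates to agree uniformly in $k$, i.e.\ $|\lambda_*| = \theta_*$. But $|\lambda_*| \neq \theta_*$ is a classical fact about the golden-mean critical renormalization fixed point: the fixed point $\zeta_*$ has a genuine critical point at $0$, so it is not a rigid rotation and its universal scaling constant is strictly different from the rotation number. This yields the desired contradiction.

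The principal obstacle is the quantitative control needed in both parts. One must verify $\mathrm{diam}(\Phi^k_n(\gamma_k \cap \Omega)) \asymp |\lambda_*|^{k-n}$ in the 2D perturbation regime, not merely at the 1D limit. This requires bounding how much the non-linear coordinate changes $H_k$, $V_k$, $T_k$ distort the underlying linear rescaling $s_k$ — which is delivered by the super-exponential closeness \eqref{eq:error estimate}, but demands genuine care, since $\gamma_n$ itself is a priori only a topological arc.
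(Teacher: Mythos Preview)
The paper does not prove Theorem~\ref{thm:gry2}; it is quoted verbatim from \cite{GaRY} and used as a black box (see the sentence ``The following is shown in \cite{GaRY}'' preceding the statement). So there is no in-paper argument to compare against. Your proposal is therefore an attempted reconstruction of the \cite{GaRY} proof, and the overall strategy --- a scaling mismatch between the dynamical partition of $I_L\cup I_R$ and the renormalization partition of $\gamma_n$ --- is indeed the correct one.

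That said, the central quantitative step of your non-smoothness argument is misidentified. You assert that $\mathrm{diam}\bigl(\Phi^k_n(\gamma_k\cap\Omega)\bigr)\asymp\prod_{j=n}^{k-1}|\lambda_j|\asymp|\lambda_*|^{k-n}$, attributing the distortion control to the super-exponential 2D error~\eqref{eq:error estimate}. But the scope map $\phi_n=H_n^{-1}\circ V_n^{-1}\circ T_n\circ s_n$ is \emph{not} close to the linear scaling $s_n$: by Corollary~\ref{convergence} it converges to $\psi_*(x,y)=(\eta_*^{-1}(\lambda_* x),\eta_*^{-1}(\lambda_* y))$, which carries a genuine square-root branch coming from the critical point of $\eta_*$. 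The nonlinearity you must control is thus one-dimensional and is present already at the fixed point; it is handled not by~\eqref{eq:error estimate} but by the Koebe-type normality of Proposition~\ref{normality of tilde} and Theorem~\ref{normality}. Consequently the actual contraction rate along the tower is governed by $(\sigma^k_n)^{-1}$, i.e.\ by the derivative of $\psi_*$ at its attracting fixed point, not by $|\lambda_*|$ alone. Likewise, since $\mathbf{R}$ corresponds to \emph{four} steps of the 1D renormalization (Lemma~\ref{4th 1d renormalization}), the interval scale is $\theta_*^{4(k-n)}$ rather than $\theta_*^{k-n}$.

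None of this breaks the strategy --- one still obtains two distinct geometric rates and the bi-Lipschitz contradiction goes through --- but the final step requires the correct inequality between these rates, and your heuristic ``$\zeta_*$ has a genuine critical point, hence the scaling differs from the rotation number'' is not a proof. In \cite{GaRY} (and in the analogous period-doubling setting of \cite{dCLM}) this is established either numerically or via an explicit bound on the universal constant; you would need to cite or supply such a bound.
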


Theorem \ref{thm:gry} follows from the above statement and the following:

\begin{thm}[\cite{GaRY}]
Suppose
\begin{displaymath}
\Sigma = \Sigma_{H_{\mu_*,\nu}},
\end{displaymath}
where $\Sigma_{H_{\mu_*,\nu}}$ is the renormalization of the H\'enon map given in theorem \ref{thm:henon}. Then the linear rescaling of the renormalization arc $s_0(\gamma_0)$ is contained in the boundary of the Siegel disc $\Delta$ of $H_{\mu_*, \nu}$. In fact, we have
\begin{displaymath}
\partial \Delta = s_0(\gamma_0) \cup H_{\mu_*, \nu} \circ s_0(\gamma_0).
\end{displaymath}
\end{thm}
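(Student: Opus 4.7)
The plan is to identify $\partial\Delta$ with the renormalization-arc construction via the linearization $\phi$ on the Siegel cylinder and the non-smoothness of the conjugacy from Theorem~\ref{thm:gry2}. By definition $\Sigma_{H_{\mu_*,\nu}} = \Lambda(H_{\mu_*,\nu}^2, H_{\mu_*,\nu})$, so the linear rescaling $s_0$ intertwines the pair dynamics: $s_0 \circ A_0 \circ s_0^{-1} = H_{\mu_*,\nu}^2$ and $s_0 \circ B_0 \circ s_0^{-1} = H_{\mu_*,\nu}$ on the appropriate domains. Theorem~\ref{thm:henon} places $\Sigma_{H_{\mu_*,\nu}}$ in the stable manifold $W^s(\iota(\zeta_*))$, so Theorem~\ref{thm:gry2} yields a homeomorphism $h_0: I_L \cup I_R \to \gamma_0$ conjugating $(A_0, B_0)$ to $(R, L)$. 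Pushing forward by $s_0$ and gluing with a single application of $H_{\mu_*,\nu}$ to close the arc into a circle (identifying the endpoints of $I_L\cup I_R$), the set $\widetilde\gamma := s_0(\gamma_0) \cup H_{\mu_*,\nu}(s_0(\gamma_0))$ becomes an $H_{\mu_*,\nu}$-invariant topological circle on which $H_{\mu_*,\nu}$ acts topologically conjugately to the rigid rotation $R_{\theta_*}$.

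The central step is to show $\widetilde\gamma \subset \partial\Delta$. Since $\widetilde\gamma$ is compact and $H_{\mu_*,\nu}$-invariant in both time directions, it lies in $K$. Suppose for contradiction that some point of $\widetilde\gamma$ lies in the open Siegel cylinder $\phi(\D \times \C)$. Then the portion of $\widetilde\gamma$ meeting the cylinder maps via $\phi^{-1}$ to a compact set invariant under $L(x,y) = (\mu_* x, \nu y)$ on which $L$ is conjugate to rigid $R_{\theta_*}$-rotation. Because $|\nu|<1$ contracts the second coordinate, the only compact $L$-invariant sets supporting rigid rotational dynamics must lie in $\{y = 0\}$, and hence are circles $\{|x|=r\} \times \{0\}$ for some $r \in (0,1)$. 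Such circles are real-analytic images under the holomorphic $\phi$, so the induced conjugacy to $R_{\theta_*}$ would be real-analytic, contradicting the statement of Theorem~\ref{thm:gry2} that $\widetilde\gamma$ admits no $C^1$-smooth conjugacy to $R_{\theta_*}$. Thus $\widetilde\gamma$ misses the open Siegel cylinder. A complementary argument using the super-exponential shadowing \eqref{eq:error estimate} ensures $\widetilde\gamma$ remains in the closure of the Siegel cylinder: for large $n$ the pair $\Sigma_n$ lies super-exponentially close to $\iota(\zeta_*)$, whose renormalization arc is the diagonal embedding of the 1D Siegel boundary for $P_{\mu_*}$; pulling back via the coordinate changes $\Phi_0^n$ traps $\gamma_0$ against the center-manifold structure through $\mathbf{q}$. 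Consequently $\widetilde\gamma \subset \phi(S^1 \times \{0\}) = \partial\Delta$, which also gives the first claim $s_0(\gamma_0) \subset \partial\Delta$.

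To conclude, Theorem~\ref{thm:gry} identifies $\partial\Delta$ as a topological circle on which $H_{\mu_*,\nu}$ acts as the minimal rotation $R_{\theta_*}$. Any closed nonempty $H_{\mu_*,\nu}$-invariant subset of $\partial\Delta$ must therefore equal $\partial\Delta$, which upgrades the inclusion $\widetilde\gamma \subset \partial\Delta$ to equality. The principal obstacle is the shadowing portion of the middle step: ensuring $\widetilde\gamma$ actually lies in the closure of the Siegel cylinder rather than in some exotic invariant set disconnected from it. Making this rigorous requires leveraging the super-exponential estimate \eqref{eq:error estimate} together with careful control of the iterated coordinate changes $\Phi_0^n$, allowing one to pass the known 1D result for $P_{\mu_*}$ (where the limiting renormalization arc is the polynomial Siegel boundary) through the H\'enon perturbation.
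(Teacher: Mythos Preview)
This statement is not proved in the present paper at all; it is quoted from \cite{GaRY}, and the sentence immediately preceding it makes the logical order explicit: Theorem~\ref{thm:gry} is \emph{deduced from} Theorem~\ref{thm:gry2} together with the statement you are trying to prove. Your final paragraph therefore reasons in a circle. You invoke Theorem~\ref{thm:gry} to assert that $\partial\Delta$ is already known to be a topological circle carrying a minimal $R_{\theta_*}$-action, and then use minimality to upgrade $\widetilde\gamma\subset\partial\Delta$ to equality; but in the development of \cite{GaRY} that description of $\partial\Delta$ is obtained \emph{through} the identification $\partial\Delta=s_0(\gamma_0)\cup H_{\mu_*,\nu}(s_0(\gamma_0))$, not prior to it.

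Setting the circularity aside, the step you yourself flag as the ``principal obstacle'' is in fact the entire substance of the theorem, and your sketch does not supply it. Your contradiction argument is fine as far as it goes: if $\widetilde\gamma$ met the open Siegel cylinder then, by minimality of the rotation on $\widetilde\gamma$ and invariance of the cylinder, all of $\widetilde\gamma$ would lie inside; pulling back by $\phi$ and using $|\nu|<1$ forces $\phi^{-1}(\widetilde\gamma)$ onto a round circle in $\D\times\{0\}$, giving an analytic conjugacy and contradicting Theorem~\ref{thm:gry2}. But this only excludes the interior of the cylinder. It places no constraint whatsoever tying $\widetilde\gamma$ to $\overline\Delta$; a priori $\widetilde\gamma$ could sit anywhere in $K\setminus\phi(\D\times\C)$. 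The sentence ``pulling back via the coordinate changes $\Phi_0^n$ traps $\gamma_0$ against the center-manifold structure through $\mathbf q$'' is not an argument, and the super-exponential estimate \eqref{eq:error estimate} by itself says nothing about where the nested sets $U_0^k\cup V_0^k$ accumulate in the original dynamical plane. In \cite{GaRY} this is handled in the opposite direction: one builds the continuous boundary extension of the linearizing map $\phi$ directly from the renormalization tower, so that $\partial\Delta$ is \emph{defined} as (the $H_{\mu_*,\nu}$-saturation of) $s_0(\gamma_0)$, and Theorem~\ref{thm:gry} then follows. Your outline reverses input and output.
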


Henceforth, we consider the renormalization arc of $\Sigma_n$ as a continuous curve $\gamma_n= \gamma_n(t)$ parameterized by $I_L \cup I_R$. The components of $\gamma_n$ are denoted
\begin{displaymath}
\gamma_n(t) =
\begin{bmatrix}
\gamma^x_n(t) \\
\gamma^y_n(t)
\end{bmatrix}.
\end{displaymath}
Lastly, denote the renormalization arc of $\iota(\zeta_*)$ by
\begin{displaymath}
\gamma_*(t) =
\begin{bmatrix}
\gamma^x_*(t) \\
\gamma^y_*(t)
\end{bmatrix}.
\end{displaymath}

The following are consequences of Theorem \ref{thm:hyperb2}.

\begin{cor} \label{convergence}
As $n \to \infty$, we have the following convergences (each of which occurs at a geometric rate):
\begin{enumerate}
\item $\eta_n \to \eta_*$,
\item $\lambda_n \to \lambda_*$ (hence $s_n \to s_*$),
\item $\phi_n \to \psi_*$, where
\begin{displaymath}
\psi_*(x,y) =
\begin{bmatrix}
\eta_*^{-1}(\lambda_* x) \\
\eta_*^{-1}(\lambda_* y)
\end{bmatrix},
\hspace{5mm} \text{and}
\end{displaymath}
\item $\gamma_n \to \gamma_*$ (hence $|\gamma^x_n(0)| \to 0$).
\end{enumerate}
\end{cor}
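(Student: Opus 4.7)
The plan is to treat \thmref{thm:hyperb2} as the backbone. Under the hypothesis $\Sigma \in W^s(\iota(\zeta_*))$, the iterates $\Sigma_n = \mathbf{R}^n(\Sigma)$ converge to $\iota(\zeta_*)$ in the norm \eqref{eq:unorm2} at the geometric rate dictated by the contracting spectrum of $D_{\iota(\zeta_*)}\mathbf{R}$. Moreover, \eqref{shadowing}--\eqref{eq:error estimate} give the super-exponential shadowing $\|A_n - \iota(\eta_n)\|, \|B_n - \iota(\xi_n)\| \lesssim \epsilon^{2^{n-1}}$, from which it follows that the one-dimensional slices $\eta_n(x) =  p_1 A_n(x) = a_n(x, 0)$ and $\xi_n =  p_1 B_n$ converge to $\eta_*, \xi_*$ in $\cH(Z, W)$ at a geometric rate; this establishes (1). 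The convergence $\xi_n \to \xi_*$ will also be used below.

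For (2), note that $\lambda_n$ is obtained from $\Sigma_n$ by successively applying $p\mathbf{R}$ and $\Pi_{\text{crit}}$ and then reading off $ p_1$ of the second component at $0$; each step is an analytic functional of $\Sigma_n$ on $\mathcal{D}(\Omega, \Gamma, \epsilon)$, and at the fixed point it produces $\lambda_*$. Therefore $\lambda_n \to \lambda_*$ at the same geometric rate as $\Sigma_n \to \iota(\zeta_*)$, and in particular $s_n \to s_*$. For (3), I would analyze the factors of $\phi_n = H_n^{-1} \circ V_n^{-1} \circ T_n \circ s_n$ individually. Since $a_n \to \eta_*$ (as a function on $\Omega$, with super-exponential suppression of the $y$-dependence by \eqref{shadowing}) and $ p_2 B_n \to \xi_*$, one gets the uniform limits $H_n(x,y) \to H_*(x,y) = (\eta_*(x), y)$ and $V_n(x,y) \to V_*(x,y) = (x, \eta_* \circ \xi_* \circ \xi_*^{-1}(y)) = (x, \eta_*(y))$. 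The translation $T_n$ tends to the identity: because $\iota(\zeta_*)$ is almost commuting with critical point at $0$, the shift $d_n$ introduced to recenter the critical point converges geometrically to $0$. Composing with $s_n \to s_*$ and substituting yields $\phi_n \to H_*^{-1} \circ V_*^{-1} \circ s_* = \psi_*$ with $\psi_*(x,y) = (\eta_*^{-1}(\lambda_* x), \eta_*^{-1}(\lambda_* y))$.

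Part (4) is the most delicate. The arc $\gamma_n$ is built as the nested intersection $\bigcap_{k \geq n}(U^k_n \cup V^k_n)$ of finite unions of $\Sigma_n$-orbits of $\Phi_n^k(\Omega)$ and $\Phi_n^k(\Gamma)$. By (3), $\Phi_n^k$ converges uniformly on compacta to $\psi_*^{k-n+1}$ as $n \to \infty$; together with $\Sigma_n \to \iota(\zeta_*)$, this gives Hausdorff convergence of each $U^k_n \cup V^k_n$ (for fixed $k-n$) to the corresponding set at the fixed point. Uniform control as $k \to \infty$ comes from the geometric contraction forced by $|\lambda_*|<1$: the diameters of the pieces $\Sigma_n^{\overline{\omega}}(\Phi_n^k(\Omega))$ decay geometrically in $|\overline{\omega}|$, uniformly in $n$, by the super-exponential shadowing combined with the one-dimensional geometric estimates at $\zeta_*$. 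A diagonal argument then yields $\gamma_n \to \gamma_*$ in Hausdorff distance at a geometric rate, and the parametric consequence $|\gamma_n^x(0)| \to 0$ follows because $t=0$ parameterizes the critical point in the conjugacy and $\gamma_*^x(0) = 0$ since $\eta_*$ has its critical point at the origin. The main obstacle is precisely coordinating the two limits ($n \to \infty$ at each depth $k$ versus $k \to \infty$ at each level $n$), which demands uniform geometric contraction estimates on the compositions $\Phi_n^k$ that do not deteriorate as $n$ grows.
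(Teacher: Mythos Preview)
The paper does not provide a proof of this corollary; it is stated without argument as a direct consequence of \thmref{thm:hyperb2}. Your proposal correctly supplies the omitted details, deriving (1)--(3) from the geometric convergence $\Sigma_n \to \iota(\zeta_*)$ together with the explicit factorization $\phi_n = H_n^{-1}\circ V_n^{-1}\circ T_n\circ s_n$, and outlining for (4) the nested-intersection argument with the appropriate uniformity caveat; this is precisely the reasoning the paper leaves implicit.
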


\subsection{Normality of the compositions of scope maps}

\noindent
Define
\begin{displaymath}
\psi_n(x,y) :=
\begin{bmatrix}
\eta_n^{-1}(\lambda_n x) \\
\eta_n^{-1}(\lambda_n y)
\end{bmatrix}.
\end{displaymath}
For $n \leq k$, denote
\begin{displaymath}
\Psi^k_n := \psi_n \circ \psi_{n+1} \circ \ldots{} \circ \psi_{k-1} \circ \psi_k.
\end{displaymath}

Let
\begin{displaymath}
\begin{bmatrix}
\sigma^k_n & 0 \\
0 & \sigma^k_n
\end{bmatrix}:=
(D_{(0,0)} \Psi^k_n)^{-1}.
\end{displaymath}

\begin{prop} \label{normality of tilde}
The family $\{\sigma^k_n \Psi^k_n\}^\infty_{k=n}$ is normal.
\end{prop}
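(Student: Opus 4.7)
The plan is to exploit the product structure of the scope maps $\psi_n$. Setting $f_n(x) := \eta_n^{-1}(\lambda_n x)$, one has $\psi_n(x,y) = (f_n(x), f_n(y))$, so $\Psi^k_n(x,y) = (F^k_n(x), F^k_n(y))$ for the 1D composition $F^k_n := f_n \circ f_{n+1} \circ \cdots \circ f_k$. The differential $D_{(0,0)} \Psi^k_n$ is then $(F^k_n)'(0) \cdot I$, which forces $\sigma^k_n = 1/(F^k_n)'(0)$; consequently $\sigma^k_n \Psi^k_n(x,y) = (G^k_n(x), G^k_n(y))$ with $G^k_n := \sigma^k_n F^k_n : Z \to \C$ satisfying $(G^k_n)'(0) = 1$. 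The proposition thereby reduces to normality of the 1D family $\{G^k_n\}_{k \geq n}$ on $Z$.

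The central step is to show that each $F^k_n$ is univalent on $Z$. Each factor $f_j$ is the linear contraction $z \mapsto \lambda_j z$ followed by a single-valued branch of $\eta_j^{-1}$, chosen so that its $j \to \infty$ limit (via Corollary~\ref{convergence}) is the branch appearing in $\psi_*$. For the composition to be well-defined and univalent, the nested images $\lambda_j \cdot (f_{j+1} \circ \cdots \circ f_k)(Z)$ must stay inside the univalence domain of that branch at every level. This is where the super-exponential convergence $\Sigma_n \to \iota(\zeta_*)$ of \thmref{thm:hyperb2} and the smallness of $\epsilon$ in the definition of $\widehat{\cN}(\iota(\zeta_*)) \subset \cD(\Omega, \Gamma, \epsilon)$ are used: they keep the critical values of $\eta_j$ uniformly away from the nested images, and an induction on $k-n$ then yields univalence of $F^k_n$, and hence of $G^k_n$.

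With univalence in place, normality follows from Koebe's distortion theorem. For any compact $K \Subset Z$ containing $0$, there is a constant $C_K$ such that every univalent $G : Z \to \C$ with $G'(0) = 1$ obeys $|G(z) - G(0)| \le C_K$ for $z \in K$. Hence $\{G^k_n - G^k_n(0)\}_{k \ge n}$ is uniformly bounded on compact subsets of $Z$, and thus normal by Montel. Consequently every subsequence of $\{G^k_n\}$ admits a sub-subsequence converging uniformly on compacta either to a holomorphic limit (when $G^k_n(0)$ is bounded along the subsequence) or to the constant $\infty$ in the spherical metric (when it escapes). In either case the family is normal in the Montel sense, and reinstating the product structure in the $y$-coordinate yields the proposition.

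The hardest part is the univalence claim: each $f_j$ is individually univalent near $0$, but preserving univalence through arbitrarily deep compositions demands uniform control of the nested domains, secured by combining \thmref{thm:hyperb2}, the contraction factors $|\lambda_j| < 1$, and the choice of $\widehat{\cN}(\iota(\zeta_*))$. Once this control is in hand, the Koebe--Montel step is essentially automatic.
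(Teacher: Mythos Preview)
Your approach is essentially the paper's: reduce to the one-dimensional family $\{G^k_n\}$ via the product structure $\Psi^k_n(x,y)=(F^k_n(x),F^k_n(y))$, establish univalence of the compositions through a nested-domain argument, and finish with Koebe distortion coordinate-wise. One small correction: the nested-domain control you need follows already from the \emph{geometric} convergence $\psi_n\to\psi_*$ of Corollary~\ref{convergence} (which is what the paper invokes), not from the super-exponential estimates of \thmref{thm:hyperb2}---those concern the $y$-dependence and off-diagonal part of $\Sigma_n$, which are irrelevant for the purely one-dimensional maps $\psi_n$.
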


\begin{proof}
By corollary \ref{convergence}, there exists a domain $U \subset \mathbb{C}^2$ and a uniform constant $c<1$ such that for all $k$ sufficiently large, the map $\psi_k$ is well defined on $U$, and
\begin{displaymath}
\Omega \cup A_{k+1}(\Omega) \cup \Gamma \cup B_{k+1}(\Gamma) \Subset c U.
\end{displaymath}
Thus, by choosing a smaller domain $U$ if necessary, we can assume that $\psi_k$ and hence, $\Psi^k_n$ extends to a strictly larger domain $V \Supset U$. It follows from applying Ko\'ebe distortion theorem to the first and second coordinate that $\{\sigma^k_n \Psi^k_n\}^\infty_{k=n}$ is a normal family.
\end{proof}

\begin{prop}\label{diagonal scope}
There exists a uniform constant $M >0$ such that
\begin{displaymath}
||\phi_n-\psi_n|| < M \epsilon^{2^{n-1}}.
\end{displaymath}
\end{prop}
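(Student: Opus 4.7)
The plan is to unpack the definition of $\phi_n = H_n^{-1} \circ V_n^{-1} \circ T_n \circ s_n$ component by component, and at each step use the shadowing estimate \eqref{shadowing}--\eqref{eq:error estimate} to replace the two-dimensional pieces by their one-dimensional counterparts up to an error of size $O(\epsilon^{2^{n-1}})$. The whole calculation is an exercise in the analytic inverse function theorem, with the shadowing bound playing the role of a small perturbation.

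First I would observe that \eqref{shadowing} and \eqref{eq:error estimate} give, on the fixed domain $\Omega$ (and by Cauchy estimates on any slightly smaller subdomain, with uniform control on derivatives),
\begin{equation*}
a_n(x,y) = \eta_n(x) + O(\epsilon^{2^{n-1}}), \quad h_n(x,y) = \eta_n(x) + O(\epsilon^{2^{n-1}}),
\end{equation*}
\begin{equation*}
b_n(x,y) = \xi_n(x) + O(\epsilon^{2^{n-1}}), \quad g_n(x,y) = \xi_n(x) + O(\epsilon^{2^{n-1}}).
\end{equation*}
Since $\eta_n \to \eta_*$ and $\xi_n \to \xi_*$ geometrically by Corollary \ref{convergence}, and since on the region relevant to the definition of $\phi_n$ (namely the preimage of $s_n(\Omega)$ under $V_n$ and $H_n$, which stays a definite distance from the critical points of $\eta_n$ and $\xi_n$) the derivatives of $\eta_n$ and $\xi_n$ are uniformly bounded away from $0$, the analytic inverse function theorem yields
\begin{equation*}
a_n(\cdot, y)^{-1}(X) = \eta_n^{-1}(X) + O(\epsilon^{2^{n-1}}), \qquad (p_2 B_n)^{-1}(Y) = \xi_n^{-1}(Y) + O(\epsilon^{2^{n-1}}),
\end{equation*}
uniformly in the parameter $y$ and in $X$, $Y$ on the relevant sets.

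Next I would use these two estimates to simplify $V_n^{-1}$ and $H_n^{-1}$:
\begin{equation*}
V_n^{-1}(X,Y) = \bigl(X,\; \eta_n^{-1}(Y) + O(\epsilon^{2^{n-1}})\bigr),\qquad H_n^{-1}(X,Y) = \bigl(\eta_n^{-1}(X)+O(\epsilon^{2^{n-1}}),\; Y\bigr),
\end{equation*}
the former coming from the telescoping identity $\eta_n \circ \xi_n \circ \xi_n^{-1} = \eta_n$. For the translation $T_n$, note that $p_1 B_n \circ A_n = \xi_n \circ \eta_n + O(\epsilon^{2^{n-1}})$ by the shadowing bounds, and $\xi_n \circ \eta_n$ has its simple critical point at $0$; the argument principle (exactly as in the justification of \eqref{eq:crit projection1}) then gives $|d_n| < C\epsilon^{2^{n-1}}$ for some uniform constant $C$.

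Finally I would compose: $T_n \circ s_n(x,y) = (\lambda_n x + d_n,\lambda_n y)$, then applying $V_n^{-1}$ yields $(\lambda_n x + d_n, \eta_n^{-1}(\lambda_n y) + O(\epsilon^{2^{n-1}}))$, and then applying $H_n^{-1}$ yields
\begin{equation*}
\phi_n(x,y) = \bigl(\eta_n^{-1}(\lambda_n x + d_n) + O(\epsilon^{2^{n-1}}),\; \eta_n^{-1}(\lambda_n y) + O(\epsilon^{2^{n-1}})\bigr) = \psi_n(x,y) + O(\epsilon^{2^{n-1}}),
\end{equation*}
using $|d_n| = O(\epsilon^{2^{n-1}})$ and the uniform bound on $(\eta_n^{-1})'$ to absorb the shift in the first coordinate. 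This gives the claimed estimate with a uniform $M$. The one genuine technical point, which I would check carefully, is the uniformity of the inverse function theorem bounds as $n$ varies; this is where Corollary \ref{convergence} is essential, since it guarantees that all the derivative and distance-to-critical-value bounds can be taken independent of $n$.
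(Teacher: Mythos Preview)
Your proposal is correct and is precisely the approach the paper has in mind: the paper's own proof is the single sentence ``The result follows readily from \eqref{shadowing} and \eqref{eq:error estimate},'' and what you have written is a careful unpacking of that sentence, tracking the $O(\epsilon^{2^{n-1}})$ error through $H_n^{-1}$, $V_n^{-1}$, and $T_n$ using the shadowing bounds and Corollary~\ref{convergence} for uniformity.
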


\begin{proof}
The result follows readily from \eqref{shadowing} and \eqref{eq:error estimate}.
\end{proof}

\begin{prop} \label{diagonal comp scope}
There exists a uniform constant $K>0$ such that
\begin{displaymath}
\sigma^k_n||\Phi^k_n-\Psi^k_n|| < K \epsilon^{2^{n-1}}.
\end{displaymath}
\end{prop}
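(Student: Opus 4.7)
The plan is to telescope the difference $\Phi^k_n - \Psi^k_n$ one scope map at a time, bound each resulting term by combining the single-step estimate of \propref{diagonal scope} with Cauchy derivative bounds furnished by the normality in \propref{normality of tilde}, and then sum the resulting super-exponentially decaying series.

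Concretely, I would introduce $F_j := \Psi^{j-1}_n \circ \Phi^k_j$ for $n \le j \le k+1$, with the conventions $\Psi^{n-1}_n = \mathrm{Id}$ and $\Phi^k_{k+1} = \mathrm{Id}$, so that $F_n = \Phi^k_n$ and $F_{k+1} = \Psi^k_n$. Telescoping gives
\[
\Phi^k_n - \Psi^k_n \;=\; \sum_{j=n}^{k} \Bigl[\Psi^{j-1}_n \circ \phi_j \circ \Phi^k_{j+1} \;-\; \Psi^{j-1}_n \circ \psi_j \circ \Phi^k_{j+1}\Bigr].
\]
For each $j > n$ I would apply the mean value inequality to the outer factor $\Psi^{j-1}_n$: the $j$-th summand is bounded above by $\|D\Psi^{j-1}_n\|_U$ (where $U$ contains the image of $\phi_j \circ \Phi^k_{j+1}$) times $\|\phi_j - \psi_j\|$. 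By \propref{normality of tilde}, the family $\{\sigma^{j-1}_n \Psi^{j-1}_n\}$ is normal on a domain $V \Supset U$, so Cauchy's estimates yield a uniform bound $\|\sigma^{j-1}_n D\Psi^{j-1}_n\|_U \le C'$, while \propref{diagonal scope} gives $\|\phi_j - \psi_j\| < M\epsilon^{2^{j-1}}$. The $j = n$ term is handled directly, since $\Psi^{n-1}_n = \mathrm{Id}$.

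Combining these estimates and multiplying through by $\sigma^k_n$, I would express each scaled summand in terms of the ratios $\sigma^k_n/\sigma^{j-1}_n$, which by the factorization $\Psi^k_n = \Psi^{j-1}_n \circ \Psi^k_j$ and Koebe distortion applied at the base point $\Psi^k_j(0)$ are comparable to $\sigma^k_j$. Summation then yields
\[
\sigma^k_n \|\Phi^k_n - \Psi^k_n\| \;\le\; C'M \sum_{j=n}^{k} \frac{\sigma^k_n}{\sigma^{j-1}_n}\, \epsilon^{2^{j-1}},
\]
and the super-exponential decay of $\epsilon^{2^{j-1}}$ eventually dominates, giving a uniform bound of the form $K \epsilon^{2^{n-1}}$.

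The main anticipated obstacle is the behaviour of the scaling ratios $\sigma^k_n/\sigma^{j-1}_n$, which grow in the gap $k - j$. To obtain a constant $K$ independent of $k$ one must exploit the double-exponential smallness of $\epsilon^{2^{j-1}}$ together with careful Koebe-type control of the derivatives of $\Psi^{j-1}_n$ at points deep inside the image of the nested contractions $\Phi^k_{j+1}$; the requisite book-keeping is already in place once the domain arrangement from the proof of \propref{normality of tilde} is invoked, so no new analytic input beyond \propref{normality of tilde} and \propref{diagonal scope} should be needed.
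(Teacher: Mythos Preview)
Your telescoping decomposition is a natural idea, but it does not yield the stated sup-norm estimate, and the obstacle you flag at the end is fatal rather than merely technical. Consider the $j=n$ term of your sum: since $\Psi^{n-1}_n=\mathrm{Id}$, it is simply $\phi_n\circ\Phi^k_{n+1}-\psi_n\circ\Phi^k_{n+1}$, whose sup norm is bounded below (for large $k$) by $|\phi_n(q)-\psi_n(q)|$, where $q$ is the limit point of $\Phi^k_{n+1}$; there is no reason for this to vanish. After multiplying by $\sigma^k_n$, which grows geometrically in $k-n$, this single term already diverges. The super-exponential decay of $\epsilon^{2^{j-1}}$ helps for \emph{large} $j$, but the dangerous terms are those with $j$ close to $n$, where the decay is weakest and the ratio $\sigma^k_n/\sigma^{j-1}_n\asymp\sigma^k_j$ is largest. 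No amount of Koebe control on $D\Psi^{j-1}_n$ rescues this, because the smallness of the image of $\Phi^k_{j+1}$ does not make $\phi_j-\psi_j$ any smaller there.

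The paper proceeds differently: it absorbs the errors inductively from level $k$ down to $n$, arriving at $\Phi^k_n-\Psi^k_n = E_n\circ\Psi^k_{n+1}$ with a single error function satisfying $\|E_n\|<2M\epsilon^{2^{n-1}}$. The key structural point is that the error is \emph{post}-composed with the long tail $\Psi^k_{n+1}$. The paper then controls not the sup norm but the derivative: by the chain rule,
\[
\sigma^k_n\bigl(E_n\circ\Psi^k_{n+1}\bigr)' \;=\; \dot\sigma^k_n\, E_n'\bigl(\Psi^k_{n+1}\bigr)\cdot \sigma^k_{n+1}\bigl(\Psi^k_{n+1}\bigr)',
\]
where $\dot\sigma^k_n:=\sigma^k_n/\sigma^k_{n+1}$ is uniformly bounded and the last factor is bounded by \propref{normality of tilde}. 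The factor $(\Psi^k_{n+1})'$---which has no counterpart in a pure sup-norm estimate---is exactly what cancels the growth of $\sigma^k_n$; this is the missing idea in your argument. Your telescoping could be adapted to give the same derivative bound (each differentiated summand would then carry a factor $D\Phi^k_{j+1}$ providing the needed cancellation), but as written it targets a quantity that is not uniformly controlled.
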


\begin{proof}
By proposition \ref{diagonal scope}, we have
\begin{displaymath}
\phi_{k-1} = \psi_{k-1} + \tilde{E}_{k-1} \hspace{5mm} \text{and} \hspace{5mm} \phi_k = \psi_k + E_k,
\end{displaymath}
where $||\tilde{E}_{k-1}||<M \epsilon^{2^{k-2}}$ and $||E_k||<M\epsilon^{2^{k-1}}$. Observe that
\begin{align*}
\phi_{k-1} \circ \phi_k &= \phi_{k-1} \circ (\psi_k + E_k) \\
&= \phi_{k-1} \circ \psi_k + \bar{E}_k \\
&= (\psi_{k-1} + \tilde{E}_{k-1}) \circ \psi_k + \bar{E}_k \\
&= \psi_{k-1} \circ \psi_k + \tilde{E}_{k-1} \circ \psi_k + \bar{E}_k,
\end{align*}
where $||\bar{E}_k||< L \epsilon^{2^{k-1}}$ for some uniform constant $L>0$ by corollary \ref{convergence}. Let
\begin{displaymath}
E_{k-1} := \tilde{E}_{k-1} + \bar{E}_k \circ \psi_k^{-1}.
\end{displaymath}
By corollary \ref{convergence}, $\psi_k^{-1}$ is uniformly bounded, and hence, we have
\begin{displaymath}
||E_{k-1}|| < M \epsilon^{2^{k-2}} + 2L\epsilon^{2^{k-1}} < 2M \epsilon^{2^{k-2}}.
\end{displaymath}
Thus, we have
\begin{displaymath}
\phi_{k-1} \circ \phi_k = \psi_{k-1} \circ \psi_k + E_{k-1} \circ \psi_k.
\end{displaymath}
Proceeding by induction, we obtain
\begin{displaymath}
\Phi^k_n = \Psi^k_n + E_n \circ \psi_{n+1} \circ \ldots{} \circ \psi_k,
\end{displaymath}
where
\begin{displaymath}
||E_n|| < 2M \epsilon^{2^{n-1}}.
\end{displaymath}

By definition, we have
\begin{displaymath}
\sigma^k_n (\psi_n \circ \psi_{n+1} \circ \ldots{} \circ \psi_k)'(0) = 1.
\end{displaymath}
Factor the scaling constant as
\begin{displaymath}
\sigma^k_n := \dot{\sigma}^k_n \sigma^k_{n+1},
\end{displaymath}
so that
\begin{displaymath}
|\dot{\sigma}^k_n \psi_n'(\psi_{n+1}\circ \ldots{} \circ \psi_k(0))| = 1,
\end{displaymath}
and
\begin{displaymath}
|\sigma^k_{n+1} (\psi_{n+1} \circ \ldots{} \circ \psi_k)'(0)| = 1.
\end{displaymath}
Let
\begin{displaymath}
M := \sup_{x \in Z}  \eta_n'(x).
\end{displaymath}
Observe that $\dot{\sigma}^k_n$ is uniformly bounded by $\lambda_n^{-1}M$. Moreover, by proposition \ref{normality of tilde}, we have that $\sigma^k_{n+1} (\psi_{n+1} \circ \ldots{} \circ \psi_k)'$ is also uniformly bounded. Therefore,
\begin{align*}
||\sigma^k_n  (E_n \circ \psi_{n+1} \ldots{} \circ \psi_n)'|| &= ||\dot{\sigma}^k_n E_n'(\psi_{n+1} \ldots{} \circ \psi_n)|| \cdot ||\sigma^k_{n+1} (\psi_{n+1} \ldots{} \circ \psi_n)'|| \\
&= K ||E_n'(\psi_{n+1} \ldots{} \circ \psi_n)|| \\
&< K \epsilon^{2^{n-1}}
\end{align*}
for some universal constant $K>0$.
\end{proof}

By proposition \ref{normality of tilde} and \ref{diagonal comp scope}, we have the following theorem.

\begin{thm} \label{normality}
The family $\{\sigma^k_n \Phi^k_n\}_{k=n}^\infty$ is normal.
\end{thm}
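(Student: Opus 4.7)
The plan is to deduce this directly from the two preceding propositions via Montel's theorem. The natural decomposition is
\[
\sigma^k_n \Phi^k_n \;=\; \sigma^k_n \Psi^k_n \;+\; \sigma^k_n (\Phi^k_n - \Psi^k_n).
\]
By Proposition \ref{normality of tilde}, the family $\{\sigma^k_n \Psi^k_n\}_{k \geq n}$ is normal, and in particular is locally uniformly bounded on its common domain. By Proposition \ref{diagonal comp scope}, the perturbation term satisfies
\[
\|\sigma^k_n (\Phi^k_n - \Psi^k_n)\| \;<\; K \epsilon^{2^{n-1}},
\]
which is a fixed constant depending only on $n$, independent of $k$. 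Consequently, $\{\sigma^k_n \Phi^k_n\}_{k\geq n}$ is locally uniformly bounded, and Montel's theorem yields normality.

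The only point requiring care is that the domains of definition match up, so that the decomposition is actually valid for all $k \geq n$ on a fixed open set. This is essentially built into the setup: the proof of Proposition \ref{normality of tilde} produces a domain $U \subset \mathbb{C}^2$ on which all sufficiently deep $\psi_k$ are defined, and Proposition \ref{diagonal scope} shows that $\phi_n$ differs from $\psi_n$ by a super-exponentially small error, so the compositions $\Phi^k_n$ are defined on a slightly smaller but still fixed open subset (one may shrink $U$ once at the outset to absorb all the error terms). Since the heavy lifting has already been done in the two cited propositions, I do not expect any significant obstacle here; the argument is essentially a one-line application of Montel's theorem to a perturbation of a normal family.
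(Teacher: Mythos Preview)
Your proposal is correct and is precisely the argument the paper has in mind: the paper's own proof is simply the sentence ``By Proposition~\ref{normality of tilde} and Proposition~\ref{diagonal comp scope}, we have the following theorem,'' and your write-up just makes explicit the Montel step and the decomposition $\sigma^k_n\Phi^k_n=\sigma^k_n\Psi^k_n+\sigma^k_n(\Phi^k_n-\Psi^k_n)$ that this sentence implicitly invokes. Your remark about matching the domains is a reasonable caveat and is handled exactly as you say, by the setup in the proof of Proposition~\ref{normality of tilde}.
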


\subsection{The boundary of the Siegel disk is not smooth.}

\subsection{The boundary of the Siegel disk is not smooth.}

Let $[t_l, t_r] \subset \mathbb{R}$ be a closed interval, and let $C : [t_l, t_r] \to \mathbb{C}$ be a smooth curve. For any subset $N \subset \mathbb{C}$ intersecting the curve $C$, we define the {\it angular deviation of $C$ on $N$} as
\begin{equation} \label{angular dev}
\angdev(C, N) := \sup_{t, s \in C^{-1}(N)}|\arg(C'(t))- \arg(C'(s))|,
\end{equation}
where the function arg$: \mathbb{C} \to \mathbb{R} / \mathbb{Z}$ is defined as
\begin{equation} \label{arg}
\arg(re^{2\pi \theta i}) := \theta.
\end{equation}

\begin{lem} \label{straight lem}
Let $\theta \in \mathbb{R} /\mathbb{Z}$, and let $C_\theta : [0, 1] \to \mathbb{C}$ be a smooth curve such that $C_\theta(0) =0$ and $C_\theta(1) = e^{2\pi \theta i}$. Then for some $t \in [0, 1]$, we have
\begin{displaymath}
\arg(C_\theta'(t)) = \theta.
\end{displaymath}
\end{lem}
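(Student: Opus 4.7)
The plan is to reduce to a one-dimensional Rolle's theorem application after a rotation normalizing $\theta$ to $0$. I would set
\[
\eta(t) := e^{-2\pi i \theta}\, C_\theta(t),
\]
so $\eta : [0,1]\to \mathbb{C}$ is smooth with $\eta(0)=0$ and $\eta(1)=1$. Because $\arg(C_\theta'(t)) = \theta + \arg(\eta'(t))$ in $\mathbb{R}/\mathbb{Z}$, it is enough to exhibit some $t\in[0,1]$ with $\arg(\eta'(t))=0$, i.e.\ with $\eta'(t)\in\mathbb{R}_{>0}$.

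Next I would split $\eta$ into real and imaginary parts, $\eta(t)=u(t)+iv(t)$ with $u,v$ smooth and real-valued, satisfying $u(0)=v(0)=v(1)=0$ and $u(1)=1$. Since $v(0)=v(1)$, Rolle's theorem furnishes some $t_0\in(0,1)$ with $v'(t_0)=0$, at which $\eta'(t_0)=u'(t_0)$ is real; equivalently, $C_\theta'(t_0)=u'(t_0)\,e^{2\pi i\theta}$ is a real multiple of $e^{2\pi i\theta}$. This already shows that $\arg(C_\theta'(t_0))\in\{\theta,\theta+\tfrac12\}$ in $\mathbb{R}/\mathbb{Z}$.

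To upgrade the conclusion to $\arg(C_\theta'(t_0))=\theta$ on the nose, I would use the integral identity
\[
\int_0^1 u'(t)\,dt = u(1)-u(0) = 1 > 0,
\]
which forces $u' > 0$ on a nonempty open set $P\subset[0,1]$, and choose $t_0$ so that the Rolle point lands inside $P$. Concretely, one picks an interval on which $v$ changes sign consistently with the signs of $u'$: if $v\equiv 0$ on $[0,1]$ the ordinary mean value theorem applied to $u$ yields a point with $u'=1$ and $v'=0$; otherwise one selects $t_0$ to be an interior extremum of $v$ on a component of $P$, which exists by an intermediate-value argument tracking the continuous map $t\mapsto (u'(t),v'(t))\in\mathbb{R}^2$ together with the constraint that its integral is $(1,0)$.

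The main obstacle I expect is precisely this last matching step: ensuring that a common point for $v'=0$ and $u'>0$ exists. The heart of the argument is that the averaged tangent $\int_0^1 \eta'(t)\,dt = 1$ has argument $0$, so $\eta'(t)$ cannot remain in a half-plane avoiding the positive real axis throughout $[0,1]$; a winding/continuity argument on $\arg(\eta'(t))$ then locates the required $t_0$. Once $t_0$ is found with $v'(t_0)=0$ and $u'(t_0)>0$, the identity $\arg(C_\theta'(t_0))=\theta$ follows immediately.
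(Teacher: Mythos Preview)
The paper states this lemma without proof, treating it as self-evident, so there is no argument to compare against. Your rotation-and-Rolle reduction is the natural approach, and it correctly produces a point $t_0$ with $v'(t_0)=0$, so that $C_\theta'(t_0)$ is a real multiple of $e^{2\pi i\theta}$ and hence $\arg(C_\theta'(t_0))\in\{\theta,\theta+\tfrac12\}$ (or $C_\theta'(t_0)=0$).

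The upgrade step you flag as ``the main obstacle'' is a genuine gap, not a technicality: the lemma \emph{as literally stated is false}. With $\theta=0$, set $\eta(0)=0$ and
\[
\eta'(t)=\bigl(1-2\cos(4\pi t)\bigr)+i\sin(2\pi t),\qquad t\in[0,1].
\]
Then $\eta$ is $C^\infty$, $\eta(1)=\int_0^1\eta'(t)\,dt=1$, and $\eta'(t)\neq 0$ for all $t$. The imaginary part $\sin(2\pi t)$ vanishes only at $t\in\{0,\tfrac12,1\}$, and at each of these points the real part equals $1-2\cos(0)=-1<0$. Hence $\eta'(t)$ never lies on the positive real ray, so $\arg(\eta'(t))\neq 0$ for every $t\in[0,1]$.

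Your half-plane heuristic breaks down precisely here: the complement of a single ray is not a half-plane, and in this example $\arg(\eta')$ sweeps through values on both sides of $0$ (roughly from $0.05$ up to $0.5$ and on to $0.95$ in $\mathbb{R}/\mathbb{Z}$) without ever attaining $0$, while the integral still equals $1$. The proposed ``interior extremum of $v$ on a component of $P$'' need not exist either, for the same reason. So no winding or continuity argument can close the gap in full generality; the Rolle conclusion $\arg(C_\theta'(t_0))\in\{\theta,\theta+\tfrac12\}$ is the sharp statement without further hypotheses. What the paper presumably intends is either this weaker (unoriented) version, or the oriented version under the standing assumption $\angdev<\tfrac12$ already in force when the lemma is invoked; your write-up proves the former cleanly.
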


\begin{lem} \label{corner lem}
Let
\begin{equation} \label{square}
q_2(x) := x^2
\hspace{5mm} \text{and} \hspace{5mm}
A^R_r := \{z \in \mathbb{C} \, | \, r < |z| < R\}.
\end{equation}
Suppose $C : [t_l,t_r] \to \mathbb{D}_R$ is a smooth curve such that $|C(t_l)|=|C(t_r)|=R$, and $|C(t_0)| < r$ for some $t_0 \in [t_l, t_r]$. Then for every $\delta >0$, there exists $M >0$ such that if $\emph{mod}(A^R_r) > M$, then either $\angdev(C, \mathbb{D}_R)$ or $\angdev(q_2 \circ C, \mathbb{D}_{R^2})$ is greater than $1/6-\delta$.
\end{lem}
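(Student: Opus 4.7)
The plan is to prove the disjunction by assuming that $\angdev(C, \mathbb{D}_R) \leq 1/6 - \delta$ and then showing that, provided $\mod(A^R_r)$ is sufficiently large, one must have $\angdev(q_2 \circ C, \mathbb{D}_{R^2}) > 1/6 - \delta$. The intuition is that a small angular deviation forces $C$ to cross $\mathbb{D}_R$ nearly along a diameter, so its endpoints on $\partial\mathbb{D}_R$ are almost antipodal; under $q_2$ (which doubles arguments) the two endpoints map to points with nearly identical arguments, yet the unit tangent has been rotated by roughly $1/2$, thanks to the identity $\arg((q_2\circ C)'(t)) = \arg C(t) + \arg C'(t) \pmod 1$.

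I would first isolate the inner passage of the curve by setting $t_1 := \inf\{t \in [t_l, t_r] : |C(t)| \leq r\}$ and $t_2 := \sup\{t \in [t_l, t_r] : |C(t)| \leq r\}$, so that the two outer arcs $C|_{[t_l, t_1]}$ and $C|_{[t_2, t_r]}$ traverse the annulus $A^R_r$ from outer to inner boundary. Writing $C(t_l) = Re^{2\pi i \alpha_0}$ and $C(t_r) = Re^{2\pi i \alpha_3}$, a direct estimate uniform in the inner endpoints yields
\[
\arg(C(t_1) - C(t_l)) = \alpha_0 + \tfrac{1}{2} + O(r/R), \qquad \arg(C(t_r) - C(t_2)) = \alpha_3 + O(r/R).
\]
Applying Lemma~\ref{straight lem} after an affine normalization on each outer arc produces times $s_1 \in [t_l, t_1]$ and $s_2 \in [t_2, t_r]$ at which $\arg C'(s_i)$ equals the corresponding chord argument. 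The hypothesis $\angdev(C, \mathbb{D}_R) \leq 1/6 - \delta$ then forces
\[
\bigl|\alpha_3 - \alpha_0 - \tfrac{1}{2}\bigr| \leq 1/6 - \delta + O(r/R) \pmod 1.
\]

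Next, evaluating $\arg((q_2 \circ C)'(t)) = \arg C(t) + \arg C'(t) \pmod 1$ at $t=t_l$ and $t=t_r$ and subtracting gives
\[
\arg((q_2 \circ C)'(t_r)) - \arg((q_2 \circ C)'(t_l)) = (\alpha_3 - \alpha_0) + (\arg C'(t_r) - \arg C'(t_l)).
\]
The first summand lies within $1/6 - \delta + O(r/R)$ of $1/2$, and the second is bounded in modulus by $1/6 - \delta$, so the whole expression lies within $1/3 - 2\delta + O(r/R)$ of $1/2 \pmod 1$. Choosing $M$ large enough for $O(r/R)$ to drop below $\delta$ puts this difference at mod-$1$ distance at least $1/6 + \delta$ from $0$; since $q_2 \circ C([t_l, t_r]) \subset \mathbb{D}_{R^2}$, this realizes the required lower bound $\angdev(q_2 \circ C, \mathbb{D}_{R^2}) \geq 1/6 + \delta > 1/6 - \delta$.

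The one genuinely delicate point I expect is the quantitative chord estimate across a thin annulus, namely that $\arg(re^{2\pi i \beta} - Re^{2\pi i \alpha}) \to \alpha + \tfrac{1}{2}$ uniformly in $\alpha, \beta$ as $r/R \to 0$, with an explicit rate depending only on $r/R$; once this rate is in hand, the threshold $M = M(\delta)$ is obtained by elementary manipulations. Beyond this, the proof is a direct assembly of Lemma~\ref{straight lem}, the product rule for $\arg$ of a derivative, and careful mod-$1$ bookkeeping.
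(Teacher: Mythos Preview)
Your argument is correct and opens the same way the paper does: assume $\angdev(C,\mathbb{D}_R)$ is small and use Lemma~\ref{straight lem} on the two arcs of $C$ crossing the annulus to force $C(t_l)$ and $C(t_r)$ to be nearly antipodal on $\partial\mathbb{D}_R$. The second half, however, differs. The paper first reduces to the degenerate case $r=0$ (so the curve actually passes through the origin) and appeals to continuity for general $r$, whereas you keep $r>0$ and carry an explicit $O(r/R)$ error throughout. More substantively, the paper does not compute $(q_2\circ C)'$ at all: it simply reruns the antipodality argument on $q_2\circ C$, observing that if $\angdev(q_2\circ C,\mathbb{D}_{R^2})<1/6$ held as well, then $q_2(C(t_l))$ and $q_2(C(t_r))=1$ would again have to be nearly antipodal, while squaring sends the nearly antipodal pair $C(t_l),C(t_r)$ to points with nearly equal arguments---a contradiction. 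You instead use the identity $(q_2\circ C)'=2CC'$ at the two endpoints and extract the angular gap directly. Your route is more computational but gives a quantitative lower bound $\angdev(q_2\circ C,\mathbb{D}_{R^2})\geq 1/6+\delta$ and an explicit dependence of $M$ on $\delta$; the paper's route is shorter and more symmetric, invoking Lemma~\ref{straight lem} twice rather than once.
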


\begin{proof}
Without loss of generality, assume that $R = 1$, and $C(t_r) = 1$. We prove the case when $r=0$, so that $C(t_0) = 0$. The general case follows by continuity.

Suppose that $\angdev(C, \mathbb{D}_R) < 1/6$. Then by lemma \ref{straight lem}, we have
\begin{displaymath}
1/3 < \arg(C(t_l)) < 2/3.
\end{displaymath}
This implies that
\begin{displaymath}
-1/3 < 2\arg(C(t_1)) < 1/3.
\end{displaymath}
Hence, by lemma \ref{straight lem}, we have $\angdev(q_2 \circ C, \mathbb{D}_{R^2}) > 1/6$.
\end{proof}

\begin{cor} \label{deformed corner lem}
Let $W \subset \mathbb{C}$ be a simply connected neighbourhood of $0$, let $C : [t_l, t_r] \to \overline{W}$ and $E: [t_l, t_r] \to \mathbb{C}$ be smooth curves, and let $f : W \to \mathbb{C}$ be a holomorphic function with a unique simple critical point at $c \in \mathbb{D}_r$ for $r < 1$. Consider the smooth curve
\begin{displaymath}
\tilde{C} := f \circ C + E.
\end{displaymath}
Suppose $C(t_l), C(t_r) \in \partial W$, and $|C(t_0)| < r$ for some $t_0 \in [t_l, t_r]$. Then for every $\delta >0$, there exists $\epsilon >0$ and $M >0$ such that if $\|E\|< \epsilon$ and $\emph{mod}(W \setminus \mathbb{D}_r) > M$, then either $\angdev(C, W)$ or $\angdev(\tilde{C}, f(W))$ is greater than $1/6-\delta$.
\end{cor}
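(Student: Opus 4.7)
The plan is to reduce to Lemma~\ref{corner lem} by conformally uniformizing both $W$ and $f(W)$ so that $f$ becomes $q_2$. I would let $\phi:\D\to W$ with $\phi(0)=c$ and $\psi:\D\to f(W)$ with $\psi(0)=f(c)$ be Riemann maps (after possibly shrinking $W$ so that $f$ is proper of degree two onto its image). The composition $g:=\psi^{-1}\circ f\circ\phi$ is then a degree-two proper self-map of $\D$ with its unique critical point at the fixed point $0$, hence equals $e^{i\beta}q_2$ for some $\beta$; rotating $\psi$ once arranges $g=q_2$. Conformal invariance of modulus gives $\mod(\D\setminus\phi^{-1}(\overline{\D_r}))>M$, and the Gr\"otzsch modulus inequality, applied to a compact set containing $0\in\phi^{-1}(\D_r)$, forces $\phi^{-1}(\D_r)\subset\D_{\rho(M)}$ with $\rho(M)\to 0$ as $M\to\infty$. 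So $\hat C:=\phi^{-1}\circ C$ has endpoints on $\partial\D$ with $|\hat C(t_0)|<\rho(M)$.

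Next, I would fix $\rho_0\in(0,1)$ small (depending on $\delta$), and let $t_l^*\in[t_l,t_0]$, $t_r^*\in[t_0,t_r]$ be the last, resp.\ first, times at which $|\hat C|=\rho_0$. Applying Lemma~\ref{straight lem} directly to the four subcurves $C|_{[t_l^*,t_0]}$, $C|_{[t_0,t_r^*]}$, $\tilde C|_{[t_l^*,t_0]}$, $\tilde C|_{[t_0,t_r^*]}$ yields
\[
\angdev(C,W)\geq\bigl|\arg(C(t_0)-C(t_l^*))-\arg(C(t_r^*)-C(t_0))\bigr|
\]
and the analogous lower bound for $\angdev(\tilde C,f(W))$; so it is enough to show that one of these two chord-angle differences exceeds $1/6-\delta$.

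The analytic input is Koebe distortion for the normalized schlicht $\phi,\psi$: $|\arg\phi'-\arg\phi'(0)|\leq\omega(\rho_0)$ on $\overline{\D_{\rho_0}}$ and $|\arg\psi'-\arg\psi'(0)|\leq\omega(\rho_0^2)$ on $\overline{\D_{\rho_0^2}}$, where $\omega(\rho)\to 0$ as $\rho\to 0$. Writing $\phi(\hat z)-\phi(\hat w)=\int_0^1\phi'(\hat w+s(\hat z-\hat w))(\hat z-\hat w)\,ds$ and analogously for $\psi$, using $f\circ\phi=\psi\circ q_2$ to rewrite the $\tilde C$ chords, and absorbing the perturbation via $\|E\|<\epsilon$, I would reduce the two chord-angle differences, modulo error terms of size $O(\omega(\rho_0^2))+O(\rho(M)/\rho_0)+O(\epsilon/(|\psi'(0)|\rho_0^2))$, to $\alpha+\tfrac{1}{2}$ and $2\alpha+\tfrac{1}{2}$ mod $1$, where $\alpha:=\arg\hat C(t_l^*)-\arg\hat C(t_r^*)$. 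Then the exact arithmetic from the proof of Lemma~\ref{corner lem} applies: $|\alpha+\tfrac{1}{2}|<1/6-\delta$ mod $1$ forces $|2\alpha+\tfrac{1}{2}|\geq 1/6+2\delta$ mod $1$, so one of the two quantities must exceed $1/6-\delta$ provided all error terms are at most $\delta/4$. This is arranged by first picking $\rho_0$ with $\omega(\rho_0),\omega(\rho_0^2)<\delta/4$, then $M$ with $\rho(M)/\rho_0<\delta/4$, and finally $\epsilon$ small enough that the $E$-perturbation lies below $\delta/4$.

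The main obstacle is that for arbitrary simply connected $W$ the Riemann map $\phi$ has no uniform distortion control up to $\partial\D$, so one cannot linearize $\phi$ along chords that reach the boundary of $\D$. The truncation to $\overline{\D_{\rho_0}}$ via the auxiliary times $t_l^*,t_r^*$ is precisely what allows Koebe distortion to give a universal bound on $|\arg\phi'-\arg\phi'(0)|$ in the regime where linearization is actually needed. A secondary technical wrinkle is that $\|E\|<\epsilon$ does not bound $\|E'\|$ directly, but in the intended applications $E$ is holomorphic, so Cauchy's estimate on a slightly enlarged domain supplies the required derivative bound.
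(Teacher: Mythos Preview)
The paper does not give a separate proof of this corollary; it is recorded as a direct consequence of Lemma~\ref{corner lem}, and the uniformization you carry out is exactly the construction the paper sets up immediately afterward (the Riemann maps $u_k,v_k$ conjugating $\eta_k$ to $q_2$ in the commutative diagram preceding Proposition~\ref{square converge}). Your argument is correct and is in fact more careful than what the paper spells out: the truncation to $\overline{\mathbb D_{\rho_0}}$ via the auxiliary times $t_l^*,t_r^*$ is precisely what makes Koebe distortion applicable with uniform constants, and your bookkeeping of the error terms $O(\omega(\rho_0^2))$, $O(\rho(M)/\rho_0)$, $O(\epsilon/(|\psi'(0)|\rho_0^2))$ and the order in which $\rho_0$, $M$, $\epsilon$ must be chosen fills in exactly the details the paper elides.

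One small remark: your parenthetical ``after possibly shrinking $W$ so that $f$ is proper of degree two onto its image'' is harmless for the conclusion (angular deviation only increases on the larger domains $W$ and $f(W)$), but note that the amount of shrinking, and hence $|\psi'(0)|$, depends on $f$; so $\epsilon$ in your final step depends on $f$ and $W$ as well as on $\delta$. This matches how the corollary is actually used in the Proof of Non-smoothness, where the relevant family $\eta_k$ is precompact.
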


Let $U \subset Z \subset \mathbb{C}$ be a simply-connected domain containing the origin. For all $k$ sufficiently large, the unique critical point $c_k$ of $\eta_k$ is contained in $U$. Let $V_k := \eta_k(U)$. Then there exists conformal maps $u_k : (\mathbb{D}, 0) \to (U, c_k)$ and $v_k : (\mathbb{D}, 0) \to (V_k, \eta_k(c_k))$ such that the following diagram commutes:
\comdia{\mathbb{D}}{U}{\mathbb{D}}{V_k}{u_k}{v_k}{q_2}{\eta_k}

By corollary \ref{convergence}, we have the following result:

\begin{prop} \label{square converge}
The maps $u_k : (\mathbb{D}, 0) \to (U, c_k)$ and $v_k : (\mathbb{D}, 0) \to (V_k, \eta_k(c_k))$ converge to conformal maps $u_* : (\mathbb{D}, 0) \to (U, 0)$ and $v_* : (\mathbb{D}, 0) \to (\eta_*(U), \eta_*(0))$. Moreover, the following diagram commutes:
\comdia{\mathbb{D}}{U}{\mathbb{D}}{\eta_*(U)}{u_*}{v_*}{q_2}{\eta_*}
\end{prop}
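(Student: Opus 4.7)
The plan is to establish convergence of $u_k$ via the Carathéodory kernel convergence theorem, then deduce convergence of $v_k$ from the commutative relation $\eta_k \circ u_k = v_k \circ q_2$ by a normal-family argument, and finally pass to the limit in the diagram. First I would remove the rotational freedom in the factorization $\eta_k = v_k \circ q_2 \circ u_k^{-1}$ by fixing the normalization $u_k'(0) > 0$; the map $v_k$ is then uniquely determined by the commutative relation, since $q_2 : \mathbb{D} \to \mathbb{D}$ is surjective.

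For the convergence of $u_k$, the simply connected domain $U$ is fixed, and by Corollary \ref{convergence} the critical point $c_k$ of $\eta_k$ in $U$ satisfies $c_k \to 0$, the critical point of $\eta_*$. Hence the pointed domains $(U, c_k)$ converge to $(U, 0)$ in the sense of Carathéodory kernels, and the Carathéodory kernel convergence theorem yields $u_k \to u_*$ uniformly on compact subsets of $\mathbb{D}$, where $u_* : (\mathbb{D}, 0) \to (U, 0)$ is the unique Riemann map with $u_*'(0) > 0$.

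Combining this with Corollary \ref{convergence}(1), the composition $\eta_k \circ u_k$ converges uniformly on compact subsets of $\mathbb{D}$ to $\eta_* \circ u_*$, and hence so does $v_k \circ q_2$. Since the images $V_k = \eta_k(U)$ lie in a common bounded region of $\mathbb{C}$ for large $k$, the family $\{v_k\}$ is normal on $\mathbb{D}$. For any subsequential limit $v_{k_j} \to V$, one has $V \circ q_2 = \eta_* \circ u_*$, and the surjectivity of $q_2$ then uniquely determines $V$. Therefore $v_k \to v_*$ uniformly on compact subsets of $\mathbb{D}$, with $v_* \circ q_2 = \eta_* \circ u_*$.

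Finally, $v_*$ is non-constant since the right-hand side is non-constant, so Hurwitz's theorem, applied to the univalent maps $v_k$, shows that $v_*$ is univalent. The relation $v_* \circ q_2 = \eta_* \circ u_*$ combined with $q_2(\mathbb{D}) = \mathbb{D}$ and $u_*(\mathbb{D}) = U$ forces $v_*(\mathbb{D}) = \eta_*(U)$, which is precisely the asserted commutative diagram. The step that most deserves care is the subsequential-limit argument passing from $v_k \circ q_2$ to $v_k$, which relies crucially on the surjectivity of $q_2$; once this is in place, the conclusion follows from the standard Carathéodory and Hurwitz theorems.
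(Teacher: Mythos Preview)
Your argument is correct and supplies exactly the details the paper omits: in the paper, Proposition~\ref{square converge} is stated as an immediate consequence of Corollary~\ref{convergence} with no further proof, so there is nothing to compare beyond the fact that your Carath\'eodory/normal-family/Hurwitz argument is the standard way to unpack that citation. Your explicit handling of the rotational normalization $u_k'(0)>0$ is a point the paper leaves implicit but which is indeed needed for the convergence statement to be well posed.
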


\begin{proof}[Proof of Non-smoothness]
By theorem \ref{normality}, the sequence $\{\sigma^k_0 \Phi^k_0\}_{k=0}^\infty$ has a converging subsequence. By replacing the sequence by this subsequence if necessary, assume that $\{\sigma^k_0 \Phi^k_0\}_{k=0}^\infty$ converges. Consider the following commutative diagrams:
\begin{displaymath}
\begin{CD}
\mathbb{D} @> u_k >> U\\
@VV q_2 V @VV \eta_k V\\
\mathbb{D} @> v_k >> V_k
\end{CD}
\hspace{8mm} \text{and} \hspace{5mm}
\begin{CD}
\Omega @> \Phi^k_0 >> \Omega\\
@VV A_k V @VV A_0 V\\
A_k(\Omega) @> \Phi^k_0 >> A_0(\Omega)
\end{CD}.
\end{displaymath}

Let $\delta >0$. Since $\{\sigma^k_0 \Phi^k_0\}_{k=0}^\infty$ converges, we can choose $R>0$ sufficiently small so that if
\begin{displaymath}
X_k := u_k(\mathbb{D}_R) \subset U_k,
\hspace{5mm} \text{and} \hspace{5mm}
Y_k := v_k (\mathbb{D}_{R^2}) \subset V_k,
\end{displaymath}
then for any smooth curves $C_1 \subset \Omega := U \times U$ and $C_2 \subset A_k(\Omega)$ intersecting $X_k \times X_k$ and $Y_k \times Y_k$ respectively, we have
\begin{displaymath}
\kappa \angdev(C_1, X_k \times X_k) < \angdev(\Phi^k_0 \circ C_1, \Phi^k_0(X_k \times X_k))
\end{displaymath}
and
\begin{displaymath}
\kappa \angdev(C_2, Y_k \times Y_k) < \angdev(\Phi^k_0 \circ C_2, \Phi^k_0(Y_k \times Y_k))
\end{displaymath}
for some uniform constant $\kappa>0$.

Consider the renormalization arc of $\Sigma_n$:
\begin{displaymath}
\gamma_n(t) =
\begin{bmatrix}
\gamma^x_n(t) \\
\gamma^y_n(t)
\end{bmatrix}.
\end{displaymath}
Recall that we have
\begin{displaymath}
\gamma^x_n, \gamma^y_n \to \gamma_*
\hspace{5mm} \text{as} \hspace{5mm}
n \to \infty,
\end{displaymath}
where $\gamma_*$ is the renormalization arc of the 1D renormalization fixed point $\zeta_*$. Now, choose $r>0$ is sufficiently small so that the annulus $X_k \setminus \mathbb{D}_r$ satisfies the condition of Corollary \ref{deformed corner lem} for the given $\delta$. Next, choose $K$ sufficiently large so that for all $k > K$, we have
\begin{displaymath}
|c_k|, |\gamma^x_k(0)| < r,
\end{displaymath}
and 
\begin{displaymath}
A_k = \begin{bmatrix}
a_k\\
h_k
\end{bmatrix}
=
\iota(\eta_k) + (e_x, e_y),
\end{displaymath}
such that $\|E := (e_x, e_y)\| < \epsilon$, where $\epsilon$ is given in Corollary \ref{deformed corner lem}.

Now, suppose towards a contradiction that the renormalization arc $\gamma_0$ of $\Sigma_0$, and hence the renormalization arc $\gamma_k$ of $\Sigma_k$ for all $k \geq 0$, are smooth. By the above estimates, we can conclude:
\begin{align*}
\angdev(\gamma_0, \Phi^k_0(X_k \times X_k)) &= \angdev(\Phi^k_0 \circ \gamma_k, \Phi^k_0(X_k \times X_k))\\
&> \kappa \angdev(\gamma_k, X_k \times X_k)\\
&> \kappa \angdev(\gamma^x_k, X_k)
\end{align*}
and
\begin{align*}
\angdev(\gamma_0, \Phi^k_0(Y_k \times Y_k)) &= \angdev(\Phi^k_0 \circ \gamma_k, \Phi^k_0(Y_k \times Y_k))\\
&> \kappa \angdev(\gamma_k, Y_k \times Y_k)\\
&= \kappa \angdev(A_k \circ \gamma_k, Y_k \times Y_k)\\
&> \kappa \angdev(a_k \circ \gamma_k, Y_k)\\
&= \kappa \angdev(\eta_k \circ \gamma^x_k + e_x(\gamma_k), Y_k).
\end{align*}

By Lemma \ref{deformed corner lem}, either $\angdev(\gamma^x_k, X_k)$ or $\angdev(a_k \circ \gamma_k, Y_k)$ is greater than $1/6 - \delta$. Hence,
\begin{displaymath}
\max\{\angdev(\gamma_0, \Phi^k_0(X_k \times X_k)), \angdev(\gamma_0, \Phi^k_0(Y_k \times Y_k))\} > l
\end{displaymath}
for some uniform constant $l > 0$. Since $\Phi^k_0(X_k \times X_k)$ and $\Phi^k_0(Y_k \times Y_k)$ both converge to a point in $\gamma_0$ as $k \to \infty$, this is a contradiction.
\end{proof}

\medskip
\noindent

\end{document}